\newtheorem{theorem}{Theorem}[section]
\newtheorem{corollary}[theorem]{Corollary}
\newtheorem{lemma}[theorem]{Lemma}
\newtheorem{proposition}[theorem]{Proposition}
\theoremstyle{definition}
\newtheorem{remark}[theorem]{Remark}
\newtheorem{example}[theorem]{Example}
\numberwithin{equation}{section}
\newcommand{\Alt}{Alt}
\newcommand{\Sym}{Sym}
\newcommand{\Aut}{\mathrm{Aut}}
\newcommand{\Out}{\mathrm{Out}}
\newcommand{\Dmc}{\mathcal{D}}
\newcommand{\Bmc}{\mathcal{B}}
\newcommand{\Pmc}{\mathcal{P}}
\newcommand{\e}{\epsilon}
\renewcommand{\leq}{\leqslant}
\renewcommand{\geq}{\geqslant}
\newcommand{\imod}[1]{\allowbreak\mkern4mu({\operator@font mod}\,\,#1)}
\begin{document}
 \title[]{Flag-transitive block designs and finite exceptional simple groups of Lie type}

 \author[S.H. Alavi]{Seyed Hassan Alavi}%
 \address{Seyed Hassan Alavi, Department of Mathematics, Faculty of Science, Bu-Ali Sina University, Hamedan, Iran.
 }%
 \email{alavi.s.hassan@basu.ac.ir and  alavi.s.hassan@gmail.com (G-mail is preferred)}

 \subjclass[]{05B05; 05B25; 20B25}%
 \keywords{Flag-transitive, $2$-design, automorphism group, almost simple group,  finite exceptional simple group, large subgroup}
 \date{\today}%

\begin{abstract}
In this article, we study $2$-designs with $\gcd(r,\lambda)=1$ admitting a flag-transitive almost simple automorphism group with socle a finite simple exceptional group of Lie type. We obtain four infinite families of such designs and provide some examples in each of these families.
\end{abstract}

\maketitle

\section{Introduction}\label{sec:intro}

A $2$-design $\Dmc$ with parameters $(v,k,\lambda)$ is a pair $(\Pmc,\Bmc)$ with a set $\Pmc$ of $v$ points and a set $\Bmc$ of $b$ blocks such that each block is a $k$-subset of $\Pmc$ and each two distinct points are contained in $\lambda$ blocks. The \emph{replication number} $r$ of $\Dmc$ is the number of blocks incident with a given point. A \emph{symmetric} design is a $2$-design with the same number of points and blocks, that is to say, $v=b$, or equivalently, $r=k$. A \emph{flag} of $\Dmc$ is a point-block pair $(\alpha, B)$ such that $\alpha \in B$. An \emph{automorphism} of $\Dmc$ is a permutation on $\Pmc$ which maps blocks to blocks and preserving the incidence. The \emph{full automorphism} group $\Aut(\Dmc)$ of $\Dmc$ is the group consisting of all automorphisms of $\Dmc$. For $G\leq \Aut(\Dmc)$, $G$ is called \emph{flag-transitive} if $G$ acts transitively on the set of flags. The group $G$ is said to be \emph{point-primitive} if $G$ acts primitively on $\Pmc$. A group $G$ is called \emph{almost simple} with socle $X$ if $X\unlhd G\leq \Aut(X)$, where $X$ is a nonabelian simple group. We here adopt the standard Lie notation for groups of Lie type. Further definitions and notation can be found in Subsection~\ref{sec:defn} below.

The main aim of this paper is to study $2$-designs with flag-transitive automorphism groups. In 1988, Zieschang \cite{a:ZIESCHANG} proved that if an automorphism group $G$ of a $2$-design with $\gcd(r,\lambda)=1$ is flag-transitive, then $G$ is a point-primitive group of almost simple or affine type. Such designs admitting an almost simple automorphism group with socle being an alternating group, a classical simple group or a sporadic simple group have been studied in \cite{a:ABD-Un-CP,a:ADM-All,a:Zhou-lam-large-sporadic,a:Zhan-nonsym-sprodic,a:Zhou-nonsym-alt,a:Zhu-sym-alternating}.
This problem for symmetric designs of affine type automorphism groups and almost simple groups with socle a finite simple exceptional group have also been  treated in \cite{a:ABD-Exp,a:Biliotti-affine-17}.
This paper is devoted to investigating the  $2$-designs with $\gcd(r,\lambda)=1$ and flag-transitive almost simple automorphism groups whose socle is a finite simple exceptional group of Lie type. Here we extend our result \cite[Corolary 1.3]{a:ABD-Exp}, and prove that there exist only four families of such $2$-designs:

\begin{theorem}\label{thm:main}
    Let $\Dmc=(\Pmc,\Bmc)$ be a non-trivial $(v, k, \lambda)$ design with $\lambda\geq 1$, and let $G$ be a flag-transitive automorphism group of $\Dmc$ whose socle $X$ is a finite simple exceptional group of Lie type. Let also $H=G_{\alpha}$ and $K=G_{B}$, for a flag $(\alpha,B)$ of $\Dmc$. If the replication number $r$ of $\Dmc$ is coprime to $\lambda$, then  $H$ is a parabolic subgroup of $G$,
    one of the following holds:
    \begin{enumerate}[{\rm \quad (a)}]
        \item $X={}^2\!B_{2}(q)$, $H\cap X\cong q^{2}{:}(q-1)$ and  $K\cap X \cong q{:}(q-1)$ with $q=2^a$ and $a\geq 3$ odd, and $\Dmc$ is a block design with parameters $v=q^2+1$, $b=q(q^{2}+1)$, $r=q^{2}$,  $k=q$ and $\lambda=q-1$;
        \item  $X={}^2\!G_{2}(q)$ and $H\cap X\cong q^3{:}(q-1)$, and $K\cap X\cong 2\times A_{1}(q)$ with $q=3^a\geq 27$, and $\Dmc$ is the Ree Unital space $U_{R}(q)$ with parameters $v=q^{3}+1$, $b=q^{2}(q^{2}-q+1)$, $r=q^{2}$, $k=q+1$ and $\lambda=1$;
        \item $X={}^2\!G_{2}(q)$, $H\cap X\cong q^3{:}(q-1)$ and $K\cap X\cong q{:}(q-1)$ with $q=3^a$ and $a\geq 3$ odd, and $\Dmc$ is a block design with parameters $v=q^3+1$, $b=q^{2}(q^{3}+1)$, $r=q^{3}$,  $k=q$ and $\lambda=q-1$;
        \item $X={}^2\!G_{2}(q)$, $H\cap X\cong q^3{:}(q-1)$ and $K\cap X\cong q^2{:}(q-1)$ with $q=3^a$ and $a\geq 3$ odd, and $\Dmc$ is a block design with parameters $v=q^3+1$, $b=q(q^{3}+1)$, $r=q^{3}$,  $k=q^{2}$ and $\lambda=q^{2}-1$.
    \end{enumerate}
\end{theorem}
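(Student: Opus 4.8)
The plan is to combine two ingredients: the elementary arithmetic that flag-transitivity with $\gcd(r,\lambda)=1$ imposes on the parameters, and the classification of \emph{large} maximal subgroups of the finite exceptional groups of Lie type. First I would record the standard consequences of the hypotheses. By Zieschang's theorem quoted above, $G$ is point-primitive, so $H=G_{\alpha}$ is maximal in $G$. Flag-transitivity gives $|G_{\alpha,B}|=|H|/r=|K|/k$, so $r$ divides $|H|$, and Fisher's inequality $b\geq v$ together with $bk=vr$ gives $r\geq k$; moreover $r>\lambda$ since $k<v$ in a non-trivial design. Hence $\lambda v=\lambda+r(k-1)<r^{2}$, so $v<r^{2}/\lambda\leq|H|^{2}$ and therefore $|G|=v\cdot|H|<|H|^{3}$; thus $H$ is large. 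A second standard fact, used constantly below, is that $r$ divides $\lambda d$ for every subdegree $d$ of the primitive action on $\Pmc$; since $\gcd(r,\lambda)=1$ this gives $r\mid d$ for every non-trivial subdegree, whence $r\mid v-1$ and $r\leq d_{1}$, where $d_{1}$ is the least non-trivial subdegree. Combining $r\leq d_{1}$ with $r>\sqrt{\lambda v}\geq\sqrt v$ yields the key inequality $d_{1}>\sqrt{|G:H|}$.

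Next I would invoke the classification of large subgroups of almost simple groups (Alavi--Burness), together with the known lists of maximal subgroups of exceptional groups and direct computation in \GAP{} for the finitely many groups of small rank and small $q$ not covered by generic arguments. This reduces $H$, up to conjugacy, to: a maximal parabolic subgroup; a maximal-rank subgroup (a subsystem subgroup or torus normaliser of large order); one of a short list of further reductive maximal subgroups such as $2\times A_{1}(q)<{}^{2}\!G_{2}(q)$ and $G_{2}(q)<{}^{3}\!D_{4}(q)$; or a subfield subgroup when $q$ is small. For each pair $(G,H)$ in this list with $H$ not parabolic I would apply $d_{1}>\sqrt{|G:H|}$: the permutation rank and subdegrees of these actions are known (from the natural geometry, or the literature on maximal-rank and reductive subgroups), and in every case there is a non-trivial subdegree $d$ with $d^{2}<|G:H|$, contradicting $r\mid d$ with $r>\sqrt{|G:H|}$; the handful of borderline pairs are eliminated instead by the divisibility relations $r\mid\gcd(|H|,v-1)$, $r\geq k$, $k-1=\lambda(v-1)/r$, $b=vr/k\in\Zbb$ and $\gcd(r,\lambda)=1$. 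This already forces $H$ to be a maximal parabolic subgroup.

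Finally I would dispose of the parabolics. For a maximal parabolic of one of the higher-rank groups $G_{2}(q)$, ${}^{3}\!D_{4}(q)$, $F_{4}(q)$, ${}^{2}\!F_{4}(q)$, $E_{6}(q)$, ${}^{2}\!E_{6}(q)$, $E_{7}(q)$, $E_{8}(q)$ the same estimate applies: the orbit of ``adjacent'' objects in the associated Lie incidence geometry is a non-trivial subdegree whose square is smaller than the degree, so $d_{1}>\sqrt{|G:H|}$ fails and no design arises. The only survivors are the rank-one groups $X={}^{2}\!B_{2}(q)$ ($q=2^{a}$) and $X={}^{2}\!G_{2}(q)$ ($q=3^{a}$), whose unique maximal parabolic is the Borel subgroup of the shape in the statement, with $G$ acting $2$-transitively on the $v=q^{2}+1$, resp. $q^{3}+1$, points. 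Here the subdegree argument is vacuous; instead I would use that $r\mid\gcd(|H|,v-1)$ and that $v-1$ is a prime power force strong restrictions on $r$, and that, since $G$ is $2$-transitive on points, \emph{every} $G$-orbit of $k$-subsets is automatically a $2$-design and is flag-transitive as soon as the block is an orbit of its own stabiliser. Thus the residual task is group-theoretic: list the subgroups $K\leq G$ having a non-trivial orbit $B$ on $\Pmc$ with $G_{B}=K$ (the possible block stabilisers), read off $(v,b,r,k,\lambda)$ from $|G|$, $|K|$ and $|K\cap H|$, and retain precisely those with $b\in\Zbb$ and $\gcd(r,\lambda)=1$. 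Running through the (maximal) subgroups of ${}^{2}\!B_{2}(q)$ and ${}^{2}\!G_{2}(q)$ and their subgroups, the admissible data are exactly those in (a)--(d): the block stabiliser is $q{:}(q-1)$ giving the Suzuki design in (a); it is $2\times A_{1}(q)$ giving the Ree unital $U_{R}(q)$ in (b); and it is a subgroup $q{:}(q-1)$, resp. $q^{2}{:}(q-1)$, of a Borel distinct from $H$ in (c), resp. (d). The presence of the full torus of order $q-1$ in $K\cap X$, and the exclusions $q\geq 8$ in (a),(c),(d) and $q\geq 27$ in (b)--(d), then fall out: a smaller complement would make $\gcd(r,\lambda)$ divisible by a non-trivial divisor of $q-1$, and small $q$ either makes $X$ non-simple or forces a trivial design.

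I expect the main obstacle to be the sheer scale of the case analysis rather than any single hard step: one must assemble reliable subdegree (or minimal-orbit-length) data for \emph{all} maximal parabolic and maximal-rank actions of every exceptional group -- including the small-$q$ exceptions handled by machine -- and, in the two rank-one cases, carry out the delicate enumeration of all candidate block stabilisers and verify that $\gcd(r,\lambda)=1$ isolates exactly the four families.
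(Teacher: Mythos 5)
Your overall skeleton (point-primitivity via Zieschang, largeness of $H$, the Alavi--Burness-type list of large maximal subgroups, then elimination of all cases except the Borel subgroups of ${}^2\!B_2(q)$ and ${}^2\!G_2(q)$) is the same as the paper's, but two of your key steps do not go through as stated. First, your elimination mechanism rests on the claim that in every non-parabolic and every higher-rank parabolic action there is a non-trivial subdegree $d$ with $d^2<|G:H|$, so that $r\mid d$ contradicts $r>\sqrt{v}$. For the reductive maximal subgroups the ranks and subdegrees are in general \emph{not} known, and the paper deliberately avoids them: it eliminates the non-parabolic cases purely from $r\mid\gcd(v-1,|H|)$ together with $\lambda v<r^2$ (the bounds $u_r$ and $l_v$ of Tables~2--3), plus arithmetic for the small-$q$ entries. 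Worse, for parabolic actions your inequality is simply false in some of the relevant cases: for $E_6(q)$ on the cosets of the $D_5$-parabolic one has $v\approx q^{16}$ while the two non-trivial subdegrees are $q(q^8-1)(q^3+1)/(q-1)\approx q^{11}$ and $q^8(q^5-1)(q^4+1)/(q-1)\approx q^{16}$, so the \emph{smallest} subdegree exceeds $\sqrt{v}$. What actually kills these cases is that $r$ divides \emph{every} subdegree, hence their gcd (giving $r\lesssim q^5$ there), and in the generic parabolic cases that $r$ divides the unique subdegree which is a power of $p$ (Liebeck--Saxl--Seitz), forcing $r\mid|v-1|_p\in\{q,q^2,q^3\}$; your write-up states the divisibility fact in the first paragraph but does not use it where it is indispensable.

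Second, and more seriously, the decisive part of the theorem --- pinning down $K\cap X$ and the parameters in the surviving rank-one cases --- is asserted rather than proved. Saying ``list the subgroups $K\leq G$ having an orbit $B$ with $G_B=K$ and retain those with $b\in\mathbb{Z}$ and $\gcd(r,\lambda)=1$'' is not a feasible argument for all $q$ (and $K\cap X$ is not maximal in cases (a), (c), (d), so running through maximal subgroups does not suffice). The paper's route is: from the $p$-power subdegree, $r=p^t$ and $k=p^{\dim-t}\lambda+1$; a maximal overgroup $M_0$ of $K_0=K\cap X$ must have index dividing $b=rv/k$, which forces $M_0$ to be a Borel subgroup (Suzuki) or a Borel or $2\times A_1(q)$ (Ree); in the Borel case one gets $k=\lambda+1=p^n$, $r=q^2$ resp.\ $q^3$, and then the structure of the two-point stabiliser $X_{\alpha,\beta}\cong\mathbb{Z}_{q-1}$ (Downs--Jones for ${}^2\!B_2$, Pierro for ${}^2\!G_2$) together with the fact that its non-trivial orbits have length $q-1$ or $(q-1)/2$ forces $(q-1)/2\mid k-1$, whence $k=q$, resp.\ $k\in\{q,q^2\}$; in the $2\times A_1(q)$ case one gets $k=q+1$, $\lambda=1$ and identifies $\Dmc$ as the Ree unital via Kleidman's classification of flag-transitive linear spaces, the alternative $p\mid k$ being excluded by the M\"obius-function data of Pierro. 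None of these ideas (overgroup divisibility, two-point stabiliser orbits, the linear-space classification) appears in your proposal, and without them there is no way to rule out, say, $k$ being an arbitrary power of $p$ in the Borel cases; your closing remark that a smaller complement in $K$ would violate $\gcd(r,\lambda)=1$ is not substantiated. So the proposal as written has a genuine gap precisely at the step that produces families (a)--(d).
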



In Section~\ref{sec:examples} below, in addition to making some general discussion on existence of the  designs obtained in Theorem \ref{thm:main}, we also provide some examples for small values of $q$. In order to prove Theorem~\ref{thm:main} in Section~\ref{sec:proof},  we first observe that the group $G$ is point-primitive, and so the point-stabiliser $H$ is maximal in $G$. In particular,  flag-transitivity of $G$ implies that $H$ is large, that is to say, $|G|\leq |H|^{3}$, and then we can apply  \cite[Theorem~1.6]{a:ABD-Exp} in which the large maximal subgroups of almost simple groups whose socle $X$ is a finite simple exceptional group of Lie type are determined. We then analyse all possible cases and prove that the only possible designs are those given in Theorem \ref{thm:main}.

\subsection{Definitions and notation}\label{sec:defn}

All groups and incidence structures in this paper are finite.
We here write $\Alt_{n}$ and $\Sym_{n}$ for the alternating group and the symmetric group on $n$ letters, respectively, and we denote by ``$n$'' the cyclic group of order $n$. Recall  that we in this paper adopt the standard Lie notation for groups of Lie type. For example, we write $A_{n-1}(q)$ and $A_{n-1}^{-}(q)$ in place of $PSL_{n}(q)$ and $PSU_{n}(q)$, respectively, $D_n^{-}(q)$ instead of  $P\Omega_{2n}^{-}(q)$, and $E_6^{-}(q)$ for ${}^2\!E_6(q)$.
We may also assume $q>2$ when $G=G_2(q)$ since $G_{2}(2)$ is not simple and $G_2(2)' \cong A^{-}_{2}(3)$. Moreover, we view the Tits group ${}^2\!F_4(2)'$ as a sporadic group. For a given positive integer $n$ and a prime divisor $p$ of $n$, we denote the $p$-part of $n$ by $n_{p}$, that is to say, $n_{p}=p^{t}$ if $p^{t}\mid n$ but $p^{t+1}\nmid n$.
Further notation and definitions in both design theory and group theory are standard and can be found, for example, in \cite{b:Atlas,b:Dixon,b:KL-90,b:lander}.

\section{Examples and comments}\label{sec:examples}

In this section, we provide some examples of $2$-designs with $\gcd(r,\lambda)=1$ admitting a flag-transitive almost simple automorphism group with socle $X$.
We note that all groups $X$ in Theorem \ref{thm:main}(a)-(d) are $2$-transitive in their coset actions of the maximal parabolic subgroups $H\cap X$ in $X$, and so by \cite[2.2.8]{b:dembowski}, the  groups $X$ are also flag-transitive.

\begin{example}[Suzuki designs]\label{ex:suzuki}
Suppose that $G$ is an almost simple group with socle $X={}^2\!B_{2}(q)$ for $q=2^a$ and $a\geq 3$ odd. Let $H$ and  $K$ be subgroups of $G$ such that $H\cap X\cong q^{2}{:}(q-1)$ and $K\cap X \cong q{:}(q-1)$ as in Theorem~\ref{thm:main}(a). The coset geometry $(X,H\cap X,K\cap X)$ may give rise to a $2$-design with parameters  $v=q^2+1$, $b=q(q^{2}+1)$, $r=q^{2}$,  $k=q$ and $\lambda=q-1$. By \cite[2.3.8]{b:dembowski}, $X$ is flag-transitive. If $\Pmc=\{1,\ldots,v\}$ and  $B$ is an orbit of $K\cap X$ of length $k=q$, then by \cite[Proposition~4.6]{b:Beth-I-99}, $(\Pmc,B^{X})$ is a $2$-design  with parameters $(q^2+1,q,q-1)$ which is not symmetric.
For $q\in \{8,32\}$, we construct this type of designs with explicit base blocks in Table~1.
\end{example}

\begin{example}[Ree unitals]\label{ex:ree-unital}
The Ree Unital spaces $U_{R}(q)$ are first discovered by L\"{u}neburg \cite{a:Luneburg-66}, and these examples arose from studying flag-transitive linear spaces \cite{a:Kantor-85,a:Kleidman-87}. This disign has parameters  $(q^{3}+1,q+1,1)$ with $q=3^a\geq 27$.   The points and blocks of $U_{R}(q)$ are the Sylow $3$-subgroups and the involutions of $^{2}\!G_{2}(q)$, respectively, and a point is incident with a block if the  block normalizes the  point. This incidence structure is a linear space and any group with $^{2}\!G_{2}(q)\leq G\leq \Aut(^{2}\!G_{2}(q))$ acts flag-transitively. This design is not symmetric. Note for $q=3$ that the Ree Unital $U_{R}(3)$ is isomorphic to the  Witt-Bose-Shrikhande space $W(8)$ as $^{2}\!G_{2}(3)'$ is isomorphic to $A_{1}(8)$, see \cite{a:delan-linear-space,a:Kleidman-87,a:Saxl2002}.
\end{example}

\begin{example}[Ree designs]\label{ex:ree-designs}
Suppose that $G$ is an almost simple group with socle $X={}^2\!G_{2}(q)$ for $q=2^a$ and $a\geq 3$ odd. Let $H$, $K_{1}$ and $K_{2}$ be subgroups of $G$ such that $H\cap X\cong q^{3}{:}(q-1)$, $K_{1}\cap X=q{:}(q-1)$ and $K_{2}\cap X \cong q^{2}{:}(q-1)$ as in Theorem~\ref{thm:main}(c)-(d). The coset geometries $(X,H\cap X,K_{i}\cap X)$ may give rise to the $2$-designs with parameters  $v=q^3+1$, $b=q(q^{3}+1)$, $r=q^{3}$,  $k=q^{i}$ and $\lambda=q^{i}-1$, for $i=1,2$. Since $G$ is $2$-transitive on the points set of this structure and $\gcd(r,\lambda) = 1$,  $X$ is flag-transitive \cite[2.3.8]{b:dembowski}. Note that $H\cap K_{i} \cap X$ is a cyclic group of order $q-1$. Let $B_{i}$ be an orbit of $K_{i}\cap X$ of length $k=q^{i}$ with $i=1,2$. If $\Pmc=\{1,\ldots,v\}$, then since $X$ is $2$-transitive, \cite[Proposition~4.6]{b:Beth-I-99} gives rise to a $2$-design $\Dmc_{i}=(\Pmc,B_{i}^{X})$ with parameters $(q^3+1,q^{i},q^{i}-1)$, for $i=1,2$, which is not symmetric, and the group $G$ is flag-transitive on $\Dmc_i$. In Table 1, for $q=27$, we introduced base blocks for these type of designs.
\end{example}

In Table \ref{tbl:examples}, we give the base block $B$ of $\Dmc$ for the case where $G=X$ is $^{2}\!B_{2}(8)$, $^{2}\!B_{2}(32)$ or $^{2}\!G_{2}(27)$.
For each example in this table, we use the  computer software \textsf{GAP} \cite{GAP4} for computational arguments, and we also use the permutation representations of both groups and parabolic subgroups given in the web version of Atlas \url{ (http://brauer.maths.qmul.ac.uk/Atlas/v3/)}.
As an example, the  Suzuki group ${}^2\!B_{2}(8)$ has a unique conjugacy class of the parabolic subgroup $H:=X_{\alpha}=2^{3+3}{:}7$, the conjugate subgroup $M$ of $H$ has one subgroup $K:=X_{B}$ of order $56$ (up to conjugation), and this subgroup has exactly one orbit $B=\{1, 4, 5, 10, 32, 48, 52, 55 \}$ of length $8$, and so for $\Pmc=\{1,\ldots,65\}$, the incidence structure  $(\Pmc,B^{G})$ is a $2$-design with parameters $(65,520,64,8,7)$. We note here that for the case where $X=\ ^{2}\!G_{2}(27)$ in Theorem \ref{thm:main}(c) and the designs with possible parameters $(19684,551853,19683,27,26)$, we obtain two base blocks in Table \ref{tbl:examples} (Nr. 3). These base blocks are orbits of two non-conjugate subgroups $K$ in $M$, and at this stage we do not know if these two designs are isomorphic.

\begin{table}[!htbp]
    \scriptsize
    \caption{\small Designs with flag-transitive and point-primitive automorphism groups $^{2}\!B_{2}(8)$, $^{2}\!B_{2}(32)$ and $^{2}\!G_{2}(27)$.}\label{tbl:examples}
    \resizebox{\textwidth}{!}{
        \begin{tabular}{llllllllll}
            \hline
            Nr.&
            $X$&
            $X_{\alpha}$&
            $X_{B}$&
            $v$&
            $b$&
            $r$&
            $k$ &
            $\lambda$ &
            Comments\\
            \hline
            $1$ &
            $^{2}\!B_{2}(8)$&
            $2^{3+3}{:}7$&
            $2^{3}{:}7$&
            $65$ &
            $520$ &
            $64$ &
            $8$ &
            $7$ &
            Theorem~\ref{thm:main}(a)\\
            \multicolumn{10}{p{15cm}}{Base block:} \\
            \multicolumn{10}{p{15cm}}{
                \tiny   $B=\{$ $1$, $4$, $5$, $10$, $32$, $48$, $52$, $55$ $\}$}\\
            \hline
            Nr.&
            $X$&
            $X_{\alpha}$&
            $X_{B}$&
            $v$&
            $b$&
            $r$&
            $k$ &
            $\lambda$ &
            Comments\\
            \hline
            $2$ &
            $^{2}\!B_{2}(32)$ &
            $2^{5+5}{:}31$&
            $2^{5}{:}31$&
            $1025$ &
            $32800$ &
            $1024$ &
            $32$ &
            $31$ &
            Theorem~\ref{thm:main}(a)\\
            \multicolumn{10}{p{15cm}}{Base block:} \\
            \multicolumn{10}{p{15cm}}{
                \tiny $B=\{$ $1$, $44$, $69$, $77$, $97$, $106$, $153$, $204$, $251$, $273$, $300$, $317$, $319$, $326$, $333$, $361$, $374$, $383$, $393$, $401$, $423$, $459$, $594$, $601$, $608$, $666$, $823$, $841$, $931$, $976$, $1015$,
                $1024$ $\}$}\\
            \hline
            Nr.&
            $X$&
            $X_{\alpha}$&
            $X_{B}$&
            $v$&
            $b$&
            $r$&
            $k$ &
            $\lambda$ &
            Comments\\
            \hline
            $3$ &
            $^{2}\!G_{2}(27)$&
            $3^{3+3+3}{:}26$ &
            $3^{3}{:}26$ &
            $19684$ &
            $551853$ &
            $19683$ &
            $27$ &
            $26$ &
            Theorem~\ref{thm:main}(c)\\
            \multicolumn{10}{p{14cm}}{Base block:} \\
            \multicolumn{10}{p{15cm}}{
                \tiny $B_{1}=\{$ $1$, $54$, $676$, $941$, $1426$, $1443$, $1464$, $1902$, $1910$, $3984$, $4612$, $4813$, $4884$, $5223$, $5849$, $8312$, $8423$, $8670$, $9369$, $11678$, $11765$, $13032$, $13580$, $14087$, $15253$, $
                16549$, $17856$ $\}$
            }\\
            \multicolumn{10}{p{15cm}}{
                \tiny $B_{2}=\{$ $1$, $42$, $95$, $1385$, $3667$, $3964$, $5173$, $5447$, $5974$, $6757$, $6772$, $7263$, $8170$, $9075$, $10131$, $10447$, $11119$, $11292$, $12766$, $12874$, $13493$, $13786$, $14163$, $16504$, $
                16618$, $17126$, $19415$ $\}$
            }\\
            \hline
            Nr.&
            $X$&
            $X_{\alpha}$&
            $X_{B}$&
            $v$&
            $b$&
            $r$&
            $k$ &
            $\lambda$ &
            Comments\\
            \hline
            $4$ &
            $^{2}\!G_{2}(27)$&
            $3^{3+3+3}{:}26$ &
            $3^{3+3}{:}26$ &
            $19684$ &
            $20439$ &
            $19683$ &
            $729$ &
            $728$ &
            Theorem~\ref{thm:main}(d)\\
            \multicolumn{10}{p{14cm}}{Base block:} \\
            \multicolumn{10}{p{15cm}}{\tiny $B=\{$ $1$, $42$, $51$, $54$, $58$, $63$, $86$, $95$, $102$, $124$, $142$, $153$, $195$, $371$, $409$, $420$, $438$, $471$, $540$, $575$, $606$, $610$, $676$, $704$, $711$, $734$, $809$, $819$, $832$, $857$, $885$, $934$, $935$, $941$, $1041$, $1042$, $1064$, $1108$, $1246$, $1305$, $1355$, $1357$, $1362$, $1366$, $1375$, $1377$, $1385$, $1388$, $1423$, $1426$, $1433$, $1442$, $1443$, $1464$, $1478$, $1509$, $1538$, $1547$, $1550$, $1729$, $1745$, $1758$, $1779$, $1794$, $1798$, $1818$, $1841$, $1849$, $1873$, $1902$, $1903$, $1910$, $1926$, $1965$, $1970$, $1995$, $2022$, $2074$, $2086$, $2089$, $2103$, $2121$, $2198$, $2207$, $2220$, $2224$, $2294$, $2316$, $2399$, $2407$, $2414$, $2415$, $2430$, $2439$, $2441$, $2471$, $2539$, $2550$, $2731$, $2749$, $2838$, $2846$, $2849$, $2892$, $2930$, $2962$, $2987$, $2994$, $3014$, $3038$, $3074$, $3104$, $3106$, $3123$, $3136$, $3161$, $3238$, $3258$, $3328$, $3333$, $3342$, $3363$, $3365$, $3373$, $3375$, $3376$, $3440$, $3492$, $3498$, $3508$, $3517$, $3555$, $3593$, $3606$, $3634$, $3667$, $3686$, $3755$, $3767$, $3769$, $3800$, $3819$, $3852$, $3875$, $3881$, $3891$, $3909$, $3913$, $3925$, $3942$, $3954$, $3964$, $3984$, $3993$, $4069$, $4102$, $4115$, $4123$, $4134$, $4151$, $4173$, $4223$, $4262$, $4271$, $4289$, $4350$, $4375$, $4378$, $4385$, $4404$, $4459$, $4539$, $4608$, $4612$, $4644$, $4654$, $4713$, $4750$, $4766$, $4779$, $4813$, $4884$, $4991$, $5017$, $5070$, $5097$, $5111$, $5128$, $5154$, $5172$, $5173$, $5218$, $5223$, $5275$, $5320$, $5346$, $5378$, $5398$, $5401$, $5447$, $5450$, $5468$, $5490$, $5500$, $5521$, $5592$, $5666$, $5713$, $5724$, $5742$, $5752$, $5788$, $5847$, $5849$, $5881$, $5895$, $5898$, $5901$, $5915$, $5932$, $5974$, $5979$, $6019$, $6029$, $6043$, $6051$, $6062$, $6109$, $6114$, $6124$, $6190$, $6194$, $6213$, $6220$, $6236$, $6264$, $6277$, $6306$, $6311$, $6448$, $6482$, $6497$, $6510$, $6559$, $6566$, $6626$, $6629$, $6666$, $6698$, $6704$, $6708$, $6734$, $6741$, $6757$, $6768$, $6772$, $6777$, $6862$, $6872$, $6873$, $6943$, $6944$, $6973$, $6976$, $6990$, $7030$, $7044$, $7108$, $7154$, $7163$, $7234$, $7255$, $7257$, $7263$, $7323$, $7327$, $7334$, $7338$, $7342$, $7395$, $7458$, $7472$, $7522$, $7544$, $7561$, $7572$, $7592$, $7604$, $7613$, $7629$, $7657$, $7672$, $7681$, $7687$, $7688$, $7758$, $7797$, $7799$, $7800$, $7806$, $7834$, $7839$, $7873$, $7894$, $8043$, $8049$, $8057$, $8085$, $8157$, $8158$, $8168$, $8170$, $8194$, $8197$, $8202$, $8253$, $8260$, $8276$, $8289$, $8293$, $8299$, $8312$, $8423$, $8481$, $8487$, $8488$, $8554$, $8593$, $8670$, $8687$, $8716$, $8754$, $8760$, $8782$, $8827$, $8865$, $8880$, $8884$, $8897$, $8951$, $8998$, $9001$, $9068$, $9075$, $9079$, $9139$, $9158$, $9197$, $9225$, $9253$, $9349$, $9355$, $9366$, $9369$, $9382$, $9427$, $9451$, $9471$, $9581$, $9611$, $9655$, $9677$, $9679$, $9750$, $9780$, $9872$, $9947$, $9991$, $10010$, $10078$, $10095$, $10113$, $10116$, $10121$, $10131$, $10155$, $10258$, $10285$, $10290$, $10351$, $10381$, $10402$, $10409$, $10447$, $10465$, $10473$, $10545$, $10632$, $10667$, $10703$, $10726$, $10751$, $10786$, $10870$, $10947$, $10967$, $10976$, $11000$, $11002$, $11044$, $11048$, $11053$, $11064$, $11107$, $11119$, $11177$, $11181$, $11231$, $11237$, $11252$, $11280$, $11292$, $11296$, $11357$, $11373$, $11404$, $11405$, $11420$, $11458$, $11500$, $11522$, $11528$, $11536$, $11569$, $11591$, $11647$, $11678$, $11728$, $11765$, $11825$, $11834$, $11839$, $11840$, $11841$, $11870$, $11902$, $11948$, $12019$, $12089$, $12110$, $12111$, $12157$, $12170$, $12194$, $12209$, $12256$, $12263$, $12268$, $12299$, $12301$, $12364$, $12393$, $12431$, $12434$, $12491$, $12554$, $12563$, $12586$, $12594$, $12639$, $12661$, $12692$, $12757$, $12766$, $12773$, $12798$, $12807$, $12815$, $12843$, $12874$, $12876$, $12900$, $12912$, $12913$, $12920$, $12921$, $12922$, $12923$, $12925$, $12951$, $13016$, $13032$, $13074$, $13088$, $13097$, $13131$, $13143$, $13166$, $13195$, $13217$, $13219$, $13230$, $13249$, $13250$, $13276$, $13293$, $13326$, $13329$, $13351$, $13362$, $13364$, $13368$, $13391$, $13454$, $13471$, $13485$, $13493$, $13499$, $13525$, $13580$, $13581$, $13603$, $13625$, $13786$, $13812$, $13817$, $13826$, $13917$, $13920$, $13929$, $13948$, $13967$, $14020$, $14067$, $14087$, $14138$, $14157$, $14163$, $14246$, $14247$, $14261$, $14279$, $14281$, $14303$, $14312$, $14339$, $14344$, $14494$, $14497$, $14546$, $14557$, $14624$, $14632$, $14651$, $14694$, $14695$, $14707$, $14711$, $14750$, $14759$, $14762$, $14777$, $14837$, $14894$, $14917$, $14931$, $14950$, $14962$, $15004$, $15071$, $15086$, $15095$, $15112$, $15131$, $15147$, $15150$, $15156$, $15182$, $15217$, $15253$, $15263$, $15301$, $15318$, $15369$, $15399$, $15427$, $15519$, $15559$, $15561$, $15568$, $15573$, $15609$, $15611$, $15628$, $15677$, $15685$, $15723$, $15746$, $15787$, $15793$, $15816$, $15819$, $15827$, $15833$, $15848$, $15944$, $15958$, $15993$, $16043$, $16060$, $16077$, $16100$, $16212$, $16215$, $16228$, $16334$, $16349$, $16359$, $16423$, $16472$, $16504$, $16522$, $16549$, $16568$, $16618$, $16619$, $16629$, $16643$, $16654$, $16682$, $16724$, $16725$, $16730$, $16768$, $16776$, $16821$, $16825$, $16842$, $16913$, $16965$, $16987$, $17004$, $17044$, $17055$, $17086$, $17093$, $17119$, $17126$, $17150$, $17180$, $17247$, $17302$, $17303$, $17364$, $17382$, $17437$, $17458$, $17480$, $17492$, $17511$, $17514$, $17533$, $17588$, $17628$, $17658$, $17692$, $17696$, $17737$, $17801$, $17856$, $17864$, $17917$, $17919$, $17925$, $17931$, $17943$, $17969$, $18134$, $18137$, $18166$, $18173$, $18178$, $18244$, $18264$, $18293$, $18351$, $18353$, $18354$, $18363$, $18407$, $18425$, $18434$, $18441$, $18476$, $18495$, $18523$, $18525$, $18594$, $18651$, $18660$, $18675$, $18730$, $18765$, $18777$, $18810$, $18836$, $18840$, $18866$, $18896$, $18906$, $18992$, $19047$, $19068$, $19100$, $19121$, $19125$, $19131$, $19171$, $19256$, $19261$, $19342$, $19415$, $19418$, $19450$, $19487$, $19504$, $19526$, $19533$, $19534$, $19588$, $19592$, $19593$, $19608$, $19613$, $19629$, $19646$, $19656$, $19678$, $19682$ $\}$}\\
            \hline
        \end{tabular}
    }
\end{table}

\section{Preliminaries}\label{sec:pre}

In this section, we state some useful facts in both design theory and group theory. Lemma \ref{lem:New} below is an elementary result on subgroups of almost simple groups.

\begin{lemma}\label{lem:New}{\rm \cite[Lemma 2.2]{a:ABD-PSL2}}
    Let $G$  be an almost simple group with socle $X$, and let $H$ be maximal in $G$ not containing $X$. Then $G=HX$ and
    $|H|$ divides $|\Out(X)|\cdot |H\cap X|$.
\end{lemma}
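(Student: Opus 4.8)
The plan is to argue directly from the definition of an almost simple group and the isomorphism theorems, with no case analysis required. First I would record that, since $G$ is almost simple with socle $X$, we may identify $X$ with $\Inn(X)$ and regard $X \unlhd G \leq \Aut(X)$, so that $G/X$ embeds into $\Aut(X)/\Inn(X) = \Out(X)$; in particular $|G/X|$ divides $|\Out(X)|$.

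Next I would observe that because $X \unlhd G$, the product $HX$ is a subgroup of $G$ containing $H$. Since $H$ does not contain $X$, we have $X \not\leq H$, hence $HX \neq H$, so $HX$ properly contains $H$. Maximality of $H$ in $G$ then forces $HX = G$, which is the first assertion.

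Finally I would apply the second isomorphism theorem to the subgroup $H$ and the normal subgroup $X$: this gives $G/X = HX/X \cong H/(H\cap X)$, whence
\[
|H| = |H\cap X|\cdot |G/X|.
\]
Combining this with the fact that $|G/X|$ divides $|\Out(X)|$ yields that $|H|$ divides $|\Out(X)|\cdot|H\cap X|$, completing the proof.

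There is no real obstacle here: the statement is an elementary consequence of the structure $X \unlhd G \leq \Aut(X)$ together with the maximality of $H$ and the second isomorphism theorem. The only point requiring a word of care is the identification of $G/X$ with a subgroup of $\Out(X)$, which is immediate once $X$ is identified with $\Inn(X)$ inside $\Aut(X)$.
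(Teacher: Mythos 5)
Your proof is correct: maximality of $H$ together with $X \unlhd G$ and $X \not\leq H$ gives $G=HX$, and the second isomorphism theorem plus the embedding of $G/X$ into $\Out(X)$ gives the divisibility claim. The paper does not reproduce a proof (it simply cites \cite[Lemma 2.2]{a:ABD-PSL2}), and your argument is exactly the standard one behind that reference, so there is nothing to add.
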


\begin{lemma}\label{lem:Tits}
    Suppose that $\Dmc$ is block design with parameters $(v,k,\lambda)$ admitting a flag-transitive and point-primitive almost simple automorphism group $G$ with socle $X$ of Lie type in characteristic $p$. Suppose also that the point-stabiliser $G_{\alpha}$, not containing $X$, is not a parabolic subgroup of $G$. Then $\gcd(p,v-1)=1$.
\end{lemma}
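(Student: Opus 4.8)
The plan is to exploit the standard arithmetic identities for a $2$-design together with the hypothesis $\gcd(r,\lambda)=1$ (which in this context follows from flag-transitivity, since flag-transitivity forces $r\mid\lambda(v-1)$ and $\gcd(r,\lambda)=1$ gives $r\mid v-1$), and to combine them with the action of a Sylow $p$-subgroup of $X$. First I would recall the two basic counting relations $bk=vr$ and $\lambda(v-1)=r(k-1)$, and the flag-transitive bound $r^{2}>\lambda v$ (equivalently $r>\lambda(k-1)/ (k-1) \cdots$; more precisely, from $\lambda(v-1)=r(k-1)$ and $k<r$ one gets $r^{2}>r(k-1)\geq\lambda(v-1)$, hence $r^{2}>\lambda(v-1)$). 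The key extra input is that $r$ divides $|G_{\alpha}|$: indeed $G$ is transitive on flags, so the point-stabiliser $G_\alpha$ is transitive on the $r$ blocks through $\alpha$, whence $r\mid |G_\alpha|$. Since $H=G_\alpha$ is assumed \emph{not} to be a parabolic subgroup, a Sylow $p$-subgroup of $X$ is not contained in $H\cap X$; combined with Lemma~\ref{lem:New} (so $|H|$ divides $|\Out(X)|\cdot|H\cap X|$) and the fact that $p\nmid|\Out(X)|$ for the relevant characteristic (or a careful treatment of the few exceptions), one deduces that $p$ does not divide $|H|$ to the full power $|X|_p$; in fact the crucial point is that $v=|X:H\cap X|$ is divisible by $p$ whenever $H\cap X$ is not parabolic, because every subgroup of $X$ of $p$-power index is parabolic (Tits' lemma / the Borel–Tits theorem).

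Concretely, the heart of the argument is: \textbf{(1)} $v\equiv 0 \imod{p}$. This is where I expect the real work to sit. The statement ``a proper subgroup of a simple group of Lie type in characteristic $p$ has index divisible by $p$ unless it is parabolic'' is exactly Tits' lemma (see \cite[Corollary]{a:ABD-PSL2}-type references, or Seitz); applying it to $X$ and $H\cap X$, which is not parabolic by hypothesis, gives $p\mid |X:H\cap X|=v$. For the almost simple overgroup $G$ one uses $G=HX$ from Lemma~\ref{lem:New}, so the point set has size $|G:H|=|X:H\cap X|=v$, and the conclusion $p\mid v$ is unaffected. \textbf{(2)} Now suppose for contradiction that $p\mid v-1$. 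Then $p\mid v$ and $p\mid v-1$ forces $p\mid 1$, an immediate contradiction — \emph{except} this cheap argument is too strong, so in fact what must be happening is that $p\mid v-1$ is being excluded by a finer divisibility. Let me reconsider: since $p\mid v$ we already have $\gcd(p,v-1)=1$ trivially. So actually \textbf{the proof reduces entirely to showing $p\mid v$}, i.e. to Tits' lemma applied to the non-parabolic maximal subgroup $H\cap X$ of $X$.

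So the streamlined plan is as follows. Step 1: note $v=|G:G_\alpha|=|X:X\cap G_\alpha|$ using $G=G_\alpha X$ (Lemma~\ref{lem:New}, valid since $G_\alpha$ does not contain $X$). Step 2: invoke the theorem of Tits (the ``$p$-indices are parabolic'' lemma, e.g.\ \cite[2.6]{a:ABD-PSL2} or Seitz's lemma) which says that if $M<X$ has index coprime to $p$ then $M$ contains a Sylow $p$-subgroup of $X$ and hence $M$ is contained in (indeed equals, when maximal) a parabolic subgroup; since $G_\alpha\cap X$ is \emph{not} parabolic (equivalently, is not the intersection with $X$ of a parabolic of $G$), its index in $X$ must be divisible by $p$. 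Step 3: conclude $p\mid v$, hence $\gcd(p,v-1)=1$. I would present it in roughly this order, with the bulk of the prose devoted to justifying Step 2 and to making precise the (standard) equivalence between ``$G_\alpha$ parabolic in $G$'' and ``$G_\alpha\cap X$ parabolic in $X$''. The main obstacle is purely expository: pinning down the exact citation for Tits' lemma in the form needed and dealing with the handful of small exceptional cases where $\Out(X)$ or the structure of $X$ behaves atypically (e.g.\ graph automorphisms, or $X$ with exceptional Schur multiplier), but none of these affect the divisibility $p\mid v$, so the theorem follows cleanly.
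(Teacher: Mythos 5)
Your streamlined argument is exactly the paper's proof: since $G_\alpha$ is maximal and not parabolic, Tits' Lemma (via \cite[1.6]{a:tits}) gives $p\mid |G:G_\alpha|=v$, whence $\gcd(p,v-1)=1$. The opening detour through the design-counting identities and $r\mid |G_\alpha|$ is unnecessary, but your final three-step plan matches the paper's one-line argument, so the proposal is correct and essentially the same.
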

\begin{proof}
    Note that $G_{\alpha}$ is maximal in $G$, then by Tits' Lemma \cite[1.6]{a:tits}, $p$ divides $|G:G_{\alpha}|=v$, and so  $\gcd(p,v-1)=1$.
\end{proof}

If a group $G$ acts on a set $\Pmc$ and $\alpha\in \Pmc$, the \emph{subdegrees} of $G$ are the size of orbits of the action of the point-stabiliser $G_\alpha$ on $\Pmc$.

\begin{lemma}\label{lem:subdeg}{\rm \cite[3.9]{a:LSS1987}}
    If $X$ is a group of Lie type in characteristic $p$, acting on the set
    of cosets of a maximal parabolic subgroup, and $X$ is neither $A_{n-1}(q)$, $D_{n}(q)$
    (with $n$ odd), nor $E_{6}(q)$, then there is a unique subdegree which is a power of $p$.
\end{lemma}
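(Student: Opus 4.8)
The plan is to translate the statement into a fact about the Weyl group $W$ of $X$ (or, for a twisted group, its relative Weyl group) with Coxeter generating set $S$, settle that fact by the theory of split $BN$-pairs, and then clear the finitely many admissible diagram types by inspection. Write $P=P_J$ with $J=S\setminus\{t\}$, so $W_J$ is a maximal proper standard parabolic subgroup of $W$; let $w_0$, $w_{0,J}$ be the longest elements of $W$, $W_J$, let $U_J$ be the unipotent radical of $P$, and recall $|U_J|=q^{\ell(w_0)-\ell(w_{0,J})}$. First I would use the Bruhat decomposition together with $|BxB|=q^{\ell(x)}|B|$ and $P=\bigsqcup_{u\in W_J}BuB$ to record that the $P$-orbits on $X/P$, hence the subdegrees, are parametrised by the double cosets $W_J\backslash W/W_J$, and that the subdegree attached to a $(J,J)$-reduced representative $w$ equals
$$n_w=q^{\ell(w)}\cdot\frac{W_J(q)}{W_K(q)},\qquad W_J(q)=\sum_{u\in W_J}q^{\ell(u)},\quad W_K=W_J\cap w^{-1}W_Jw,$$
where $W_K$ is the standard parabolic subgroup of $W_J$ attached to $w$, so that $W_J(q)/W_K(q)=\sum_{x\in W_K\backslash W_J}q^{\ell(x)}$ is a polynomial in $q$ with non-negative integer coefficients and constant term $1$.

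The decisive point is elementary: such a polynomial has value $\equiv 1\pmod p$, so $n_w$ is a power of $p$ exactly when $W_J(q)/W_K(q)=1$, i.e. $W_K=W_J$, i.e. $w$ normalises $W_J$; and then $n_w=q^{\ell(w)}$. Hence, apart from the trivial orbit, the $p$-power subdegrees correspond bijectively to the non-identity elements of $N_W(W_J)/W_J$. When $X$ is none of the excluded types we have $w_0=-1$ in $W$, so $w_0$ normalises every standard parabolic; then $w_0w_{0,J}$ is $(J,J)$-reduced of positive length $\ell(w_0)-\ell(w_{0,J})$ and gives the subdegree $q^{\ell(w_0)-\ell(w_{0,J})}=|U_J|$ — concretely the orbit of parabolics opposite to $P$, with stabiliser a Levi complement of $U_J$ in $P$. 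So such a subdegree always exists; the content is its uniqueness.

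Uniqueness is the step I expect to be the main obstacle: one must show $N_W(W_J)=W_J\rtimes\langle w_0w_{0,J}\rangle$ for every maximal $J$, so that $N_W(W_J)/W_J$ has order exactly two. I would do this by a node-by-node pass through the admissible untwisted diagrams $B_n$, $C_n$, $D_n$ ($n$ even), $E_7$, $E_8$, $F_4$, $G_2$: for each $t$, identify the subsystem $\Phi_J$ and verify that every $w$ normalising $W_J$ lies in $W_J\langle w_0w_{0,J}\rangle$ — equivalently, that no diagram automorphism of $\Phi_J$ beyond the one induced by $w_0w_{0,J}$ is realised inside $W$. The twisted groups ${}^2\!B_2(q)$, ${}^2\!G_2(q)$, ${}^2\!F_4(q)$, ${}^3\!D_4(q)$, ${}^2\!E_6(q)$ must be run in parallel, with ``maximal parabolic'' meaning a single orbit of nodes under the twisting graph automorphism and the Poincar\'e-polynomial bookkeeping redone over the relative root system; the same dichotomy (normalise $W_J$, or else contribute a non-$p$-power) applies. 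A partial shortcut, valid whenever $\Phi_J$ spans the ambient space, is that normalising $W_J$ forces stabilising the span of $\Phi_J$, whose $W$-stabiliser is generated by $W_J$ and a single reflection; but keeping this finite check honest across all diagrams and twists is the laborious part.

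Finally I would confirm that the three excluded families genuinely fail the conclusion, so the hypotheses are sharp. In each of them some node $t$ satisfies $\sigma_0(t)\neq t$, where $\sigma_0$ is the diagram automorphism induced by $-w_0$; then $w_0$ does not normalise $W_{S\setminus t}$, and one checks directly that $N_W(W_{S\setminus t})=W_{S\setminus t}$ for that node — for example, $\PSL_n(q)$ acting on the points of $\PG(n-1,q)$ has the single non-trivial subdegree $q(q^{n-1}-1)/(q-1)$, which is not a power of $p$ once $n\geq 3$. So this particular maximal parabolic has no non-trivial $p$-power subdegree at all, whereas the $\sigma_0$-fixed nodes of the same group do yield one, and no statement uniform over all maximal parabolics can hold for these groups.
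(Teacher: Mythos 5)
The paper offers no proof of this lemma --- it is quoted verbatim from Liebeck--Saxl--Seitz \cite[3.9]{a:LSS1987} --- so there is no internal argument to compare yours against; judged on its own, your reduction is correct and is essentially the standard route. The orbit-size formula $n_w=q^{\ell(w)}\,W_J(q)/W_K(q)$ with $W_K=W_J\cap w^{-1}W_Jw$ a standard parabolic of $W_J$ (Kilmoyer/Curtis), the observation that the second factor is $\equiv 1 \pmod p$, the resulting bijection between nontrivial $p$-power subdegrees and nontrivial cosets in $N_W(W_J)/W_J$, and the existence statement via $w_0=-1$ (respectively the longest element of the relative Weyl group for the twisted types) giving the suborbit of size $|U_J|$ are all sound; moreover, since $w_0$ is then central, $(w_0w_{0,J})^2=1$, so uniqueness is exactly the assertion $|N_W(W_J):W_J|=2$ for every maximal $J$. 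It is also worth noting that for the way the paper actually uses the lemma (Lemma~\ref{lem:six}(d): $r$ divides $\gcd(v-1,p^c)$), only the existence half is needed, and that half of your argument is complete.

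The genuine gap is that the uniqueness half --- which you yourself flag as the main obstacle --- is never established: the node-by-node verification over $B_n$, $C_n$, $D_n$ ($n$ even), $E_7$, $E_8$, $F_4$, $G_2$, and the parallel treatment of the twisted relative systems with their unequal parameters, are announced but not carried out, so what you have is a correct reduction plus an unexecuted finite case analysis. The proposed shortcut does not help as stated: for a maximal parabolic $\Phi_J$ has corank $1$, so it never spans the ambient space, and the claim that the $W$-stabiliser of its span is generated by $W_J$ and a single reflection is itself unproved (what one really needs is that no element of $W$ acts trivially on $\Phi_J$ and as $-1$ on the $1$-dimensional complement --- equivalently, no root is orthogonal to $\Phi_J$ --- together with control of which graph automorphisms of $\Phi_J$ are realised inside $W$). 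The target statement $N_W(W_J)=W_J\rtimes\langle w_0w_{0,J}\rangle$ is true and the check is feasible, for instance via Howlett's determination of normalisers of parabolic subgroups of Coxeter groups, so your approach would succeed if executed; but as written the proof is incomplete at precisely the step carrying the content of ``unique''. The closing paragraph on sharpness of the excluded families is correct but not needed for the lemma.
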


\begin{remark}\label{rem:subdeg}
    We remark that even in the cases excluded in Lemma~\ref{lem:subdeg}, many of the maximal parabolic subgroups still have the property as asserted, see proof of  \cite[Lemma 2.6]{a:Saxl2002}. In particular, for an almost simple group $G$ with socle $X=E_{6}(q)$, if $G$  contains a
    graph automorphism or $H =P_{i}$ with $i$ one of $2$ and $4$, the conclusion of Lemma~\ref{lem:subdeg} is still true.
\end{remark}

\begin{proposition}{\rm \cite[2.3.7(a)]{b:dembowski} and \cite{a:ZIESCHANG}}\label{prop:flag}
    Let $\Dmc$ be a $2$-design whose replication number $r$ is coprime to $\lambda$. If $G$ is a flag-transitive automorphism group of $\Dmc$, then $G$ is a primitive group on points set and it is of almost simple or affine type.
\end{proposition}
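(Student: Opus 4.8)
The plan is to cite the two ingredients separately: the fact that a flag-transitive group of a $2$-design is point-primitive when $\gcd(r,\lambda)=1$ is precisely Zieschang's theorem \cite{a:ZIESCHANG}, and the fact that a point-primitive flag-transitive group of a $2$-design is of almost simple or affine type follows from the structure theory in \cite[2.3.7(a)]{b:dembowski}. For completeness I would first recall why flag-transitivity together with $\gcd(r,\lambda)=1$ forces point-primitivity. Suppose $G$ is flag-transitive but preserves a nontrivial point-partition with blocks of size $c$, where $1<c<v$. Counting flags through a point shows that $r$ divides $|G_\alpha|$ suitably, and a standard divisibility argument (as in \cite[2.3.7(a)]{b:dembowski}) shows that $c$ divides $r$ and also that $c\mid \lambda\cdot(\text{something})$; combining these one deduces that a common divisor greater than $1$ of $r$ and $\lambda$ would be forced unless the partition is trivial. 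The crux is the numerical identity $r(k-1)=\lambda(v-1)$ together with flag-transitivity giving $b\mid r|G_\alpha|$ type relations, from which $\gcd(r,\lambda)=1$ rules out imprimitivity.

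Having established point-primitivity, I would invoke the O'Nan--Scott theorem to split into the possible types of primitive permutation groups. A flag-transitive automorphism group acts transitively on blocks as well, and by a classical argument (again \cite[2.3.7(a)]{b:dembowski}), flag-transitivity is a strong enough symmetry condition to exclude the product-action, diagonal, twisted wreath, and almost simple cases are the only surviving non-affine possibility, while affine type genuinely can occur. The key point here is that in the diagonal and product types the point-stabiliser has a normal subgroup structure incompatible with acting transitively on a block of size $k$ subject to $r(k-1)=\lambda(v-1)$ and $\gcd(r,\lambda)=1$; this is where one uses that the socle in those cases is a direct power $T^m$ with $m\geq 2$ and the subdegree/divisibility constraints fail.

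The main obstacle, as usual in these O'Nan--Scott arguments, is ruling out the product action type cleanly: one must show that a primitive group of product-action type on $\Pmc=\Delta^m$ with $m\geq 2$ cannot be flag-transitive on a $2$-design with $\gcd(r,\lambda)=1$. The standard route is to note that in product action the subdegrees come in a constrained multiplicative form, and that $\lambda(v-1)=r(k-1)$ forces relations among these subdegrees that cannot hold once coprimality of $r$ and $\lambda$ is imposed; this is exactly the content packaged in \cite[2.3.7(a)]{b:dembowski} and \cite{a:ZIESCHANG}, so in the write-up I would simply cite those sources rather than reproduce the case analysis. Thus the proof reduces to: (1) quote Zieschang for point-primitivity, (2) quote \cite[2.3.7(a)]{b:dembowski} for the reduction to almost simple or affine type, and we are done.

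\begin{proof}
By \cite{a:ZIESCHANG}, if $G$ is a flag-transitive automorphism group of a $2$-design $\Dmc$ with $\gcd(r,\lambda)=1$, then $G$ acts point-primitively on $\Pmc$. A point-primitive flag-transitive automorphism group of a $2$-design is, by the O'Nan--Scott classification together with \cite[2.3.7(a)]{b:dembowski}, of almost simple or affine type: indeed, in product-action, diagonal, and twisted wreath types the point-stabiliser structure combined with the basic identity $r(k-1)=\lambda(v-1)$ and the transitivity of $G$ on flags forces a common prime divisor of $r$ and $\lambda$, contradicting $\gcd(r,\lambda)=1$. Hence $G$ is primitive on $\Pmc$ and of almost simple or affine type, as claimed.
\end{proof}
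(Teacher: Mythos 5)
Your final write-up is, in substance, exactly what the paper does: Proposition~\ref{prop:flag} is not proved in the paper at all, it is simply quoted from the literature, and your proof likewise reduces to citing \cite[2.3.7(a)]{b:dembowski} and \cite{a:ZIESCHANG}. That is acceptable, but two points in your narrative need correcting. First, the attribution is reversed: point-primitivity under flag-transitivity with $\gcd(r,\lambda)=1$ is the classical statement \cite[2.3.7(a)]{b:dembowski} (Dembowski's book predates the O'Nan--Scott theorem, so it cannot contain the type reduction), whereas the reduction of the primitive group to affine or almost simple type is precisely Zieschang's contribution \cite{a:ZIESCHANG}; you cite them the other way around. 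Second, your sketched mechanism for excluding the remaining O'Nan--Scott types --- that product, diagonal or twisted wreath type would ``force a common prime divisor of $r$ and $\lambda$'' --- is not how the argument runs and would not be easy to make precise as stated; the actual argument exploits the consequences of coprimality already recorded in Lemma~\ref{lem:six}, namely that $r$ divides $v-1$ and every nontrivial subdegree and that $\lambda v<r^2$, and plays these divisibility and size constraints off against the socle structure $T^m$ ($m\geq 2$) in the excluded types. Since you ultimately defer to the cited sources rather than rely on that sketch, the proof stands, but the sketch as written should not be read as a faithful outline of either reference.
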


\begin{lemma}\label{lem:six} {\rm \cite[Lemmas 5 and 6]{a:Zhou-nonsym-alt}}
    Let $\Dmc$ be a $2$-design with prime replication number $r$, and let $G$ be a flag-transitive automorphism group of $\Dmc$. If $\alpha$ is a point in $\Pmc$ and $H:=G_{\alpha}$, then
    \begin{enumerate}[\rm \quad (a)]
        \item $r(k-1)=\lambda(v-1)$.  In particular, if $r$ is coprime to $\lambda$, then $r$ divides $v-1$ and $\gcd(r,v)=1$;
        \item $vr=bk$;
        \item $r\mid |H|$ and $\lambda v<r^2$;
        \item $r\mid d$, for all nontrivial subdegrees $d$ of $G$.
    \end{enumerate}
\end{lemma}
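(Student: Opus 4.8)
The plan is to obtain (a)--(c) by elementary double counting, invoking the transitivity hypotheses only for the divisibility statements, and to reserve the genuine use of the group action for (d). For (a), fix a point $\alpha$ and count the pairs $(\beta,B)$ with $\beta\in\Pmc\setminus\{\alpha\}$ and $\alpha,\beta\in B$: summing over the $r$ blocks through $\alpha$ (each contributing its $k-1$ other points) gives $r(k-1)$, while summing over the $v-1$ points $\beta\ne\alpha$ (each lying with $\alpha$ in exactly $\lambda$ blocks, by the $2$-design property) gives $\lambda(v-1)$; hence $r(k-1)=\lambda(v-1)$. If $\gcd(r,\lambda)=1$ then $r\mid v-1$ at once, and $v\equiv 1\pmod r$ yields $\gcd(r,v)=1$. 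For (b) one counts the incident point-block pairs $\{(\beta,B):\beta\in B\}$ in two ways, by points and by blocks, getting $vr=bk$.

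For (c), the divisibility $r\mid |H|$ is a direct consequence of flag-transitivity: given two flags $(\alpha,B)$ and $(\alpha,B')$ with the same point, the element of $G$ carrying one to the other fixes $\alpha$, so $H=G_\alpha$ is transitive on the $r$ blocks incident with $\alpha$, whence $r\mid |H|$. For the bound $\lambda v<r^2$ I would combine (a) with Fisher's inequality $b\ge v$: from $vr=bk$ this gives $k\le r$, and since the design is non-trivial ($k<v$) the identity in (a) forces $r>\lambda$; therefore $\lambda v=r(k-1)+\lambda=rk-(r-\lambda)<rk\le r^2$.

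The one item needing the group action in an essential way is (d), which I expect to be the (mild) crux. Let $\Delta$ be a $G_\alpha$-orbit on $\Pmc\setminus\{\alpha\}$ of size $d$, and set $S=\{(\beta,B):\beta\in\Delta,\ \alpha\in B,\ \beta\in B\}$. Counting over $\beta\in\Delta$ gives $|S|=d\lambda$ by the $2$-design property. On the other hand $G_\alpha$ acts on $S$, and the map $(\beta,B)\mapsto B$ is a $G_\alpha$-equivariant map from $S$ to the set of $r$ blocks through $\alpha$, on which $G_\alpha$ is transitive by flag-transitivity; hence all fibres have a common size $m$, so $|S|=rm$. Thus $r\mid d\lambda$. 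Finally, since $r$ is prime it remains only to exclude $r\mid\lambda$: if $r\mid\lambda$ then $r(k-1)=\lambda(v-1)$ forces $k-1\ge v-1$, contradicting $k<v$; hence $\gcd(r,\lambda)=1$ and $r\mid d$. The only delicate point throughout is the non-triviality assumption $2\le k<v$, which is exactly what supplies $r>\lambda$ in (c) and $r\nmid\lambda$ in (d); this causes no loss since the lemma is applied only to non-trivial designs.
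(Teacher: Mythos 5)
Your proof is correct and is essentially the standard argument behind the cited result of Zhou and Wang (the paper itself only quotes the lemma, without proof): double counting for (a)--(b), orbit--stabiliser plus Fisher's inequality for (c), and the suborbit count giving $r\mid \lambda d$ combined with $r\nmid\lambda$ for (d). Your caveat about non-triviality ($2\le k<v$), which is what yields $r>\lambda$ in (c) and $r\nmid\lambda$ in (d), is the right one and is harmless since the lemma is applied only to non-trivial designs.
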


For a point-stabiliser $H$ of an automorphism group $G$ of a flag-transitive design $\Dmc$, by Lemma~\ref{lem:six}(c), we conclude that $\lambda|G|\leq |H|^{3}$, and so by \cite[Theorem 1.6]{a:ABD-Exp}\label{thm:large-ex} we have the list of possible point-stabiliser subgroups  $H$ of $G$.

\begin{corollary}\label{cor:large}
    Let $\Dmc$ be a symmetric design admitting a point-primitive and flag-transitive almost simple automorphism group $G$ with socle $X$ being a finite simple exceptional group of Lie type, and let $H=G_{\alpha}$, for a point $\alpha$ in $\Dmc$. Then $H$ is either parabolic, or one of the subgroups listed in {\rm Tables~\ref{tbl:large-np}-\ref{tbl:num}}.
\end{corollary}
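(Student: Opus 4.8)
The plan is to deduce from flag-transitivity that $H=G_\alpha$ is a \emph{large} maximal subgroup of $G$ — that is, $|G|\le|H|^{3}$ — and then to read off the conclusion directly from the classification of large maximal subgroups of almost simple exceptional groups in \cite[Theorem 1.6]{a:ABD-Exp}. First I would note that, since $G$ is point-primitive, $H$ is a maximal subgroup of $G$; and since every automorphism of $\Dmc$ is determined by its action on the point set, the group $G\le\Aut(\Dmc)$ acts faithfully on $\Pmc$, so $H$ is core-free and in particular does not contain the socle $X$. Thus $H$ is a maximal subgroup of $G$ not containing $X$, which is exactly the setting of \cite[Theorem 1.6]{a:ABD-Exp}.

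The numerical input comes next. Flag-transitivity forces $H$ to act transitively on the set of $r$ blocks incident with $\alpha$, so $r$ divides $|H|$, and in particular $r\le|H|$. Because $\Dmc$ is symmetric we have $b=v$ and $r=k$, and the standard identity $r(k-1)=\lambda(v-1)$ becomes $\lambda v = r^{2}-(r-\lambda)$. For a non-trivial design $k<v$, whence $\lambda<k=r$, and therefore $\lambda v<r^{2}\le|H|^{2}$. Since $v=|G:H|=|G|/|H|$, this rearranges to $\lambda|G|<|H|^{3}$, and as $\lambda\ge 1$ we obtain $|G|\le|H|^{3}$, i.e.\ $H$ is large. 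This is the bound recorded after Lemma~\ref{lem:six}; I would either cite the relevant parts of that lemma or, to sidestep its primality hypothesis, simply reprove the two lines above.

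It then only remains to invoke \cite[Theorem 1.6]{a:ABD-Exp}: every large maximal subgroup $H$ of an almost simple group $G$ with socle a finite simple exceptional group of Lie type, with $X\not\le H$, is either a parabolic subgroup of $G$ or one of the subgroups listed in Tables~\ref{tbl:large-np}-\ref{tbl:num}; applying this to our $H$ gives the statement. I do not expect any genuine obstacle: the only points needing care are checking that the elementary inequality $\lambda v<r^{2}$ really holds in the symmetric case — which it does, and more cleanly than in general, since $b=v$ makes any appeal to Fisher's inequality unnecessary — and verifying that \cite[Theorem 1.6]{a:ABD-Exp} is phrased in precisely the ``parabolic, or in these tables'' dichotomy, so that the corollary is a direct specialisation of it. All the substantive work sits inside that cited classification.
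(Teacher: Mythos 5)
Your proposal is correct and follows essentially the same route as the paper: point-primitivity gives maximality of $H$, flag-transitivity gives $r\mid |H|$ and $\lambda v<r^{2}$, hence $\lambda|G|\leq|H|^{3}$, and the conclusion is then read off from the classification of large maximal subgroups in \cite[Theorem 1.6]{a:ABD-Exp}. The only (harmless) deviation is that you rederive $\lambda v<r^{2}$ directly from $r=k$ and $r(k-1)=\lambda(v-1)$ instead of citing Lemma~\ref{lem:six}(c), which is a reasonable precaution given that lemma's stated hypothesis on $r$.
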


\begin{table}[!htbp]
    \centering
    \scriptsize
    \caption{\small Some large maximal subgroups $H$ of almost simple groups $G$ with socle $X$ a finite simple exceptional group of Lie type.}\label{tbl:large-np}
    \begin{tabular}{lllll}
        \hline
        $X$ &
        $H\cap X$ &
        $l_{v}$ &
        $u_{r}$ &
        Conditions
        \\
        \hline
        ${}^2\!B_2(q^{3})$ &
        ${}^2\!B_2(q)$ &
        $q^{10}$ &
        $q^{4}$ &
        \\
        ${}^2\!G_2(q)$ &
        $A_{1}(q)$ &
        $q^2(q^2-q+1)$&
        $a(q-1)$& \\
        ${}^2\!G_2(q^{3})$&
        ${}^2\!G_2(q)$&
        $q^{14}$ &
        $q^{6}$\\
        ${}^3\!D_4(q)$ &
        $(q^2+\e q+1)A_{2}^{\e}(q)$ &
        $\frac{q^{9}(q^{8}+q^{4}+1)(q^{3}+\e)}{2\cdot (q^{2}+\e q+1)}$ &
        $18a(q-\e)^2(q^2+\e q+1)$ &
        $3\nmid q^2+\e q+1$ \\
        ${}^3\!D_4(q)$ &
        $(q^2+\e q+1)A_{2}^{\e}(q)$ &
        $\frac{q^{9}(q^{8}+q^{4}+1)(q^{3}+\e)}{6\cdot (q^{2}+\e q+1)}$ &
        $48a(q^2+\e q+1)$&
        $3\mid q^2+\e q+1$\\
        ${}^3\!D_4(q)$ &
        $A_{1}(q^3)A_{1}(q)$ &
        $q^8(q^8+q^4+1)$&
        $12a$&
        \\
        ${}^3\!D_4(q)$ &
        $G_2(q)$ &
        $q^6(q^8+q^4+1)$ &
        $12a$ &
        \\
        ${}^3\!D_4(q^{2})$&
        ${}^3\!D_4(q)$ &
        $q^{28}$&
        $486q(q^4+q^2+1)^{2}$&
        $q\geq 4$
        \\
        ${}^2\!F_4(q)$&
        ${}^2B_{2}(q)\wr2$ &
        $q^{14}$ &
        $8a$& \\
        ${}^2\!F_4(q)$ &
        $ B_{2}(q){:}2$ &
        $q^{15}$ &
        $a(q^2-1)^2/2$&
        \\
        ${}^2\!F_4(q^{3})$&
        $ {}^2\!F_4(q)$ &
        $q^{52}$ &
        $q^{18}$ &
        \\
        $G_2(q)$ &
        $A_{2}^{\e}(q)$ &
        $q^{3}(q^{3}+\e1)/2$ &
        $q^{3}-\e1$ &
        \\
        $G_2(q)$&
        ${}^2\!G_2(q)$ &
        $q^3(q+1)(q^3-1)$&
        $2a(q^{2}-q+1)$&
        \\
        $G_2(q)$ &
        $A_{1}(q)^{2}$ &
        $q^4(q^4+q^2+1)$&
        $32a$&
        \\
        $G_2(q^{2})$&
        $G_2(q)$&
        $q^{14}$ &
        $324q(4q^2-1)$ &
        $q\geq 7$
        \\
        $G_2(q^{3})$&
        $G_2(q)$&
        $q^{28}$ &
        $3q^{3}$ &
        \\
        $F_4(q)$ &
        $B_4(q)$ &
        $q^{32}$ &
        $2a(q^{4}+1)$ &
        \\
        $F_4(q)$&
        $D_4(q)$ &
        $q^{24}/6$ &
        $24(q^{4} -1)$&
        \\
        $F_4(q)$&
        $A_{1}(q)C_3(q)$ &
        $q^{28}$ &
        $a(q^{2}-1)^4$&
        \\
        $F_4(q)$&
        $C_{4}(q)$ &
        $q^{22}$ &
        $2a(q^{4}+1)$ &
        \\
        $F_4(q)$&
        $C_{2}(q^2)$ &
        $q^{28}$ &
        $100a(q^{4}+1)$& \\
        $F_4(q)$&  ${C_{2}(q)}^2$ &
        $q^{29}$ &
        $4a(q^4-1)^2(q^2-1)^2$ & \\
        $F_4(q)$&
        ${}^2\!F_4(q)$ &
        $q^{25}$ &
        $2a(q^{2}+1)^{2}(q^{4}-q^{2}+1)$ &
        \\
        $F_4(q)$&
        ${}^3\!D_4(q)$ &
        $q^{22}/3$ &
        $3(q^{8}+q^{4}+1)$ &
        \\
        $F_4(q)$&
        $A_{1}(q)G_{2}(q)$ &
        $q^{34}/4a^2$ &
        $4a^2q^{10}$ &
        \\
        $F_4(q^{2})$&
        $F_4(q)$ &
        $q^{52}$&
        $4q^{17}$&
        \\
        $F_4(q^{3})$&
        $F_4(q)$ &
        $q^{104}$&
        $6q^{17}$&
        \\
        $E_6^{\e}(q)$ &
        $A_{1}(q)A_{5}^{\e}(q)$ &
        $q^{39}/3$ &
        $27q^{11}$ &

        \\
        $E_6^{\e}(q)$&
        $F_4(q)$ &
        $q^{24}/6a$ &
        $q^{8}+q^{4}+1$& \\
        $E_6^{\e}(q)$&
        $(q-\e1)D_5^{\e}(q)$ &
        $q^{29}/3$ &
        $20(q-\e1)^{2}(q^{4}+1)$& \\
        $E_6^{\e}(q)$&  $C_4(q)$ &
        $q^{39}/3$ &
        $3q^4$ &

        \\
        $E_6^{\e}(q)$&
        $(q^2+\e q+1){\cdot}{}^3\!D_4(q)$ &
        $q^{43}/9$ &
        $64eaq^{12}(q^2+\e q+1)$&
        $(\e,q){\neq}(-,2)$
        \\
        $E_6^{\e}(q)$&
        $(q-\e1)^2{{\cdot}}D_4(q)$ &
        $q^{42}/6$ &
        $48aq^{4}(q-\e1)^{6}(q+\e1)^{4}$ &
        $(\e,q){\neq} (+,2)$, $e=1$
        \\
        $E_6^{\e}(q)$&
        $(q-\e1)^2{{\cdot}}D_4(q)$ &
        $q^{44}/288$ &
        $2^{8}\cdot 3^2(q-\e1)^{6}$&
        $(\e,q){\neq} (+,2)$, $e=3$
        \\
        $E_6^{\e}(q^2)$&
        $E_6^{\e'}(q)$&
        $q^{78}$ &
        $18q^{25}$&
        $\e=+$
        \\
        $E_6^{\e}(q^3)$&
        $E_6^{\e}(q)$ &
        $q^{156}$&
        $18q^{43}$&
        \\
        $E_7(q)$ &
        $(q-\e1)E_{6}^{\e}(q)$ &
        $q^{48}$ &
        $3(q^{9}-\e 1)$&
        \\
        $E_7(q)$&
        $A_{1}(q)D_6(q)$ &
        $q^{49}$ &
        $12(q^{2}-1)(q^{8}-1)$&
        \\
        $E_7(q)$&
        $A_{7}^{\e}(q)$
        &
        $q^{63}$ &
        $3(q-\e1)^{2}(q+\e2)$&

        \\
        $E_7(q)$&
        $A_{1}(q)F_4(q)$ &
        $q^{53}/8a^2$ &
        $2^{12}a^8q^{4}(q^2+1)^2(q^4+1)$ &
        \\
        $E_7(q^{2})$ &
        $E_7(q)$  &
        $q^{133}$ &
        $4q^{49}$ &
        \\
        $E_7(q^{3})$
        &  $E_7(q)$  &
        $q^{266}$ &
        $6q^{43}$ &
        \\
        $E_8(q)$ &
        $A_{1}(q)E_7(q)$ &
        $q^{109}$ &
        $4q^{16}$ &
        \\
        $E_8(q)$ &
        $D_{8}(q)$ &
        $q^{124}$ &
        $aq^{52}$ &
        \\
        $E_8(q)$ &
        $A_{2}^{\e}(q)E_6^{\e}(q)$ &
        $q^{156}$ &
        $2aq^{47}$ &
        \\
        $E_8(q^{2})$ &
        $E_8(q)$ &
        $q^{248}$ &
        $12q^{89}$ &
        \\
        $E_8(q^{3})$ &
        $E_8(q)$ &
        $q^{496}$ &
        $18q^{69}$ &
        \\
        \hline
        Note: & \multicolumn{4}{p{11cm}}{\tiny The value $l_v$ in the third column is a lower bound for parameter $v$. The value $u_r$ in the fourth column is an upper bound for the parameter $r$. Here $\e=\pm$, $\e'=\pm$ and $e=\gcd(3,q-\e1)$.}\\
    \end{tabular}
\end{table}

\begin{table}[h]
    \centering
    \scriptsize
    \caption{\small Some large maximal subgroups $H$ of almost simple groups $G$ with socle $X$ a finite simple exceptional group of Lie for small $q$.}\label{tbl:num}
    \begin{tabular}{llll}
        \hline
        $X$ &
        $H\cap X$ &
        $v$ &
        $r$ divides \\
        \hline
        ${}^2\!B_2(8)$ & $13{:}4$ & $560$ &  $156$ \\
        ${}^2\!B_2(32)$ & $41{:}4$ & $198400$ & $820$ \\
        ${}^2\!G_2(4)$ & ${}^2\!G_2(2)$ & $20800$& $12096$ \\
        ${}^2\!G_2(9)$ & ${}^2\!G_2(3)$ & $5321700$& $4245696$ \\
        ${}^2\!G_2(16)$ & ${}^2\!G_2(4)$ & $285282304$& $503193600$ \\
        ${}^2\!G_2(25)$ & ${}^2\!G_2(5)$ & $6348062500$& $5859000000$ \\
        ${}^3\!D_4(2)$ & $7^2{:}SL_{2}(3)$ & $179712$ & $3528$ \\
        ${}^3\!D_4(4)$ & ${}^3\!D_4(4)$ & $320819200$ & $634023936$ \\
        ${}^3\!D_4(9)$ & ${}^3\!D_4(3)$ & $25143164583300$ & $61682494700736$ \\
        ${}^2\!F_4(8)$ & $SU_{3}(8){:}2$ & $8004475184742400$ & $99283968$ \\
        ${}^2\!F_4(8)$& $PGU_{3}(8){:}2$ & $8004475184742400$ & $99283968$  \\
        $G_2(3)$ & $2^{3}{\cdot}A_{2}(2)$ & $3528$ & $192$ \\
        $G_2(4)$ & $A_{1}(13)$ & $230400$ & $2184$\\
        $G_2(4)$& $J_{2}$ & $416$ & $1209600$ \\
        $G_2(5)$ & $G_2(2)$ & $484375$ & $12096$ \\
        $G_2(5)$& $2^{3}{\cdot}A_{2}(2)$ & $4359375$ & $1344$\\
        $G_2(7)$ & $G_2(2)$ & $54925276$ & $12096$ \\
        $G_2(11)$ & $J_{1}$ & $2145199320$ & $175560$\\
        $F_4(2)$  & ${}^3\!D_4(2)$ & $15667200$ & $422682624$\\
        $F_4(2)$& $D_{4}(2)$ & $3168256$ & $1045094400$ \\
        $F_4(2)$& $Alt_{9}$ & $18249154560$ & $362880$ \\
        $F_4(2)$& $Alt_{10}$ & $1824915456$& $3628800$\\
        $F_4(2)$& $A_3(3){\cdot} 2$ & $272957440$ & $24261120$ \\
        $F_4(2)$& $J_{2}$ & $2737373184$ & $1209600$ \\
        $F_4(2)$& $(Sym_6 \wr Sym_2){\cdot}2$ & $3193602048$ & $2073600$ \\
        $E_7(2)$ & $Fi_{22}$ & $123873281581429293827751936$ & $64561751654400$ \\
        $E_7(2)$& $A_{7}^{+}(2)$ & $373849134340755161088$ & $21392255076846796800$\\
        $E_7(2)$& $A_{7}^{-}(2)$ & $268914162119825424384$ &$29739884203317657600$  \\
        $E_7(2)$& $E_{6}^{-}(2)$ & $2488042946297856$& $3214364146718543865446400$\\
        $E_6^{-}(2)$ & $J_{3}$ &$253925177425920$ & $301397760$ \\
        $E_6^{-}(2)$ & $Alt_{12}$ & $319549996007424$ & $1437004800$ \\
        $E_6^{-}(2)$ & $B_{3}(3){:}2$ & $16690645565440$ & $55024220160$ \\
        $E_6^{-}(2)$ & $Fi_{22}$ & $1185415168$ & $387370509926400$\\
        $E_6^{-}(2)$ & $D_{5}^{-}(2)$ & $1019805696$ & $75046138675200$\\
        \hline
    \end{tabular}
\end{table}

\section{Proof of the main result}\label{sec:proof}

Suppose that $\Dmc$ is a nontrivial $(v, k, \lambda)$ design admitting a flag-transitive almost simple automorphism group $G$ with socle $X$ being a finite simple exceptional group of Lie type. Lemma~\ref{lem:New} yields
\begin{align}
v=\frac{|X|}{|H\cap X|}.\label{eq:v}
\end{align}

If the replication number $r$ is coprime to $\lambda$, then Proposition \ref{prop:flag} implies that $G$ is point-primitive, or equivalently, the point-stabiliser $H=G_{\alpha}$ is maximal in $G$, for some point $\alpha$ of $\Dmc$. We now apply Corollary~\ref{cor:large} and conclude that $H$ is either parabolic, or one of the subgroups listed in Tables~\ref{tbl:large-np} and \ref{tbl:num}. We now run through these possible subgroups and prove the main result.

We first assume that $H$ is not parabolic. If $X$ and $H$ are as in one of the rows of Table \ref{tbl:large-np}, then one can find the value of the parameter $v$ and the explicit structure of $H$ in \cite{a:ABD-Exp} and therein references. We now easily obtain a lower bound $l_v$ for $v$ and an upper bound $u_{r}$ for $r$ as in the third and fourth columns of Table \ref{tbl:large-np}, respectively. For each case recorded in Table \ref{tbl:large-np}, we observe that $u_{r}^2<l_{v}$, and this  implies that $r^2<v$ which contradicts Lemma \ref{lem:six}(c). If $X$ and $H\cap X$  are as in Table~\ref{tbl:num}, then by \eqref{eq:v} and Lemma~\ref{lem:six}(c), the parameters  $v$ and $r$ are as in the third and fourth columns of Table~\ref{tbl:num}, respectively. For each value of $v$, we also know that $r$ divides $v-1$, and so for each pair $(v,r)$, the parameter $b$ must divide $vr$, and then the parameter $k$ can be obtained by $k=vr/b$, and finally we can find $\lambda=r(k-1)/(v-1)$. We note also that these parameters must satisfy $\lambda v<r^2$. We then observe that the list of subgroups in Table \ref{tbl:num} gives rise to no possible parameters.

Therefore, $H$ is a parabolic subgroup of $G$. In what follows, we further assume that $K=G_B$ and $K_{0}=K\cap X$, where $B$ is a block containing $\alpha$. We now continue our argument by case by case analysis. We note here that the value of the  parameter $v$ in each case, can be read off from \cite[Table 4]{a:ABD-Exp}.

Suppose first that $X={}^2\!B_{2}(q)$ and $H\cap X=q^{2}{:}(q-1)$ with $q=p^a=2^{2m+1}\geq 8$. We may assume by \cite{b:K-Ex-linear} that $\lambda\geq 2$. Then by \eqref{eq:v}, we have that  $v=q^2+1$, and so $|v-1|_p=q^2$. Note by Lemma~\ref{lem:subdeg} that $G$ has a subdegree $p^{c}$. Thus Lemma \ref{lem:six}(a) and (d) implies that $r$ divides $\gcd(v-1,p^s)$, and hence $r=p^{t}$ is a divisor of $|v-1|_p=q^2$, for some positive integer $t\leq 2a$. Then Lemma \ref{lem:six}(a) yields $k=p^{2a-t}\lambda+1$. Note that $b=rv/k$, $v=q^2+1$ and $G$ is transitive on the set of blocks of $\Dmc$. Then $|G:K|=b=p^t(q^2+1)/k$, where $K=G_{B}$ with $\alpha \in B$. Assume now that  $M_{0}$ is a maximal subgroup of $X$ containing $K_{0}=K\cap X$. Then $|X:M_{0}|$ must divide $b$. The knowledge of maximal subgroups of $X={}^2\!B_{2}(q)$ shows that $K_{0}$ embeds into a parabolic subgroup $M_{0}\cong q^{2}{:}(q-1)$ of index $q^{2}+1$. Again, we must have $|X:K_{0}|$ dividing $b=p^t(q^2+1)/k$, and since $q^{2}+1$ divides $|X:K_{0}|$, it follows that $k$ divides $p^{t}$.  Since also $k=p^{2a-t}\lambda+1$, we conclude that $t=2a$ and $k=\lambda+1=p^{n}$ for some $n\leq 2a$, and hence $r=q^{2}$ and $b=p^{2a-n}(q^{2}+1)$.
Moreover, \cite[Corollary 1.3]{a:ABD-Exp} implies that $n\neq 2a$, and hence $K_{0}$ is properly contained in $M_{0}$. From now on we can assume that $M_{0}=(H_{0})_{\beta}$, for some $\beta \in \Pmc$. Moreover, as $X$ is $2$-transitive on points, \cite[2.2.8]{b:dembowski}, the  group $X$ is also flag-transitive. We no apply \cite[Lemma 2]{a:Downs-Suz-16} and conclude that $X_{\alpha,\beta}=H_{0}\cap M_{0}$ is isomorphic to a cyclic group of order $q-1$ which has index $q^{2}$ in $H_{0}$. But $H_{0}\cap K_{0}=X_{\alpha,B}$ is contained in $X_{\alpha,\beta}$ and $|H_{0}:H_{0}\cap K_{0}|=r=q^{3}$. Thus $X_{\alpha,B}=H_{0}\cap K_{0}=X_{\alpha,\beta}$, hence all subgroups of $X_{\alpha,\beta}$ fix the block $B$. Since the non-trivial $X_{\alpha,\beta}$-orbits are of length $q-1$ or $(q-1)/2$, it follows that $B\setminus\{\alpha\}$ is union of non-trivial $X_{\alpha,\beta}$-orbits, that is to say, $(q-1)/2$ divides $k-1$. Recall that $k=p^{n}$, for some $1\leq n<2n$. Therefore, $k=q$, as desired in part (a).

Suppose now that $X={}^2\!G_{2}(q)$ and $H\cap X=q^3{:}(q-1)$ with $q=p^{a}=3^{2m+1}\geq 9$. \eqref{eq:v}, we have that  $v=q^3+1$.  Lemma~\ref{lem:subdeg} implies that $G$ has a subdegree $p^{s}$. Thus Lemma \ref{lem:six}(a) and (d) implies that $r$ divides $\gcd(v-1,p^s)$, and hence $r=p^{t}$ is a divisor of $|v-1|_p=q^3$. Since also $v<r^{2}$, it follows that $3a/2<t\leq 3a$. Lemma \ref{lem:six}(a) yields $k=p^{3a-t}\lambda+1$. Since $b=rv/k$ and $v=q^3+1$,  we have that $|G:K|=b=p^t(q^3+1)/k$, where $K=G_{B}$ with $\alpha\in B$.
Let $M$ be a maximal subgroup of $G$ containing $K$ and $M_{0}=K\cap M$. Then $M_{0}$ is a maximal subgroup of $X$ containing $K_{0}=K\cap X$. Then $|G:M|=|X:M_{0}|$ is a divisor of $b$, and so by running through the list of maximal subgroups of $X=^{2}\!G_{2}(q)$ in \cite[Table 8.43]{b:BHR-Max-Low}, we find that the only possibilities are $2\times A_{1}(q)$ and $q^3{:}(q-1)$. If $|G:X|=c$ with $c$ a positive integer dividing $|\Out(X)|=a$, then $M$ is isomorphic to either $(2\times A_{1}(q))\cdot c$, or $(q^3{:}(q-1))\cdot c$.

Let first $M_{0}$ be isomorphic to $2\times A_{1}(q)$. Then $q^{2}(q^{2}-q+1)$ divides $b=p^{t}(q^{3}+1)/k$, and so $k$ divides $p^{t-2a}(q+1)$ implying also that $2a\leq t$.
Note also that $|X:K_{0}|$ divides $b$. Then by inspecting the maximal subgroups of $A_{1}(q)$ containing $K_{0}$, we easily observe that $K_{0}$ is isomorphic to either $A_{1}(q)$, or $2\times A_{1}(q)$. Since the point stabiliser $H$ is a solvable group, the subgroup $A_{1}(q)$ of $K_{0}$ acts non-trivially on the block $B$, and since $q+1$ is the smallest degree of non-trivial action of $A_{1}(q)$, we have that $k\geq q+1$, or equivalently, $\lambda \geq p^{t-2a}$ with $2a\leq t\leq 3a$.

If $k=p^{3a-t}\lambda +1$ is coprime to $p$, then since $k\mid p^{t-2a}(q+1)$, it follows that $k$ divides $q+1$, that is to say, $k\leq q+1$, or equivalently, $\lambda\leq p^{t-2a}$. We have already proved that $k\geq q+1$ and $\lambda \geq p^{t-2a}$. Therefore, $k=q+1$, $\lambda=p^{t-2a}$, $r=p^{t}$ and $b=p^{t}(q^2-q+1)$ with $t\geq 2a$. The assumption that $r$ is coprime to $\lambda$ forces $t= 2a$, and so $\lambda=1$. We now apply \cite[Theorem A]{b:K-Ex-linear} and conclude that $\Dmc$ is the Ree Unital space with $b=q^{2}(q^{2}-q+1)$, $r=q^{2}$ and $k=q+1$. This is part (b), as claimed.

If $k=p^{3a-t}\lambda +1$ is divisible by $p$, then $k=\lambda+1$ and $t=3a$. This implies that $r=q^3$ and $k$ divides $p^{t-2a}(q+1)=q(q+1)$. Since $b=q^{3}(q^{3}+1)/k$ and $|G:M|=q^{2}(q^{2}-q+1)$, it follows that $|M:K|=q(q+1)$ and $|K|=ck(q-1)$ where $c=|G:X|$. This also implies that $G=KX$. Recall that $K_{0}$ is isomorphic to $A_{1}(q)$ or $2\times A_{1}(q)$, and so $|M:K_{0}|$ is $m$ or $2m$, respectively. On the other hand,  $|M:K_{0}|=|M:K|\cdot |K:K_{0}|=[q(q+1)/k]\cdot m$. Therefore, $k$ is $q(q+1)$ or $q(q+1)/2$ if $K_0$ is isomorphic to $2\times A_{1}(q)$ or $A_{1}(q)$, respectively. Since $b=p^{3}(q^{3}+1)/k$ and $\lambda=k-1$, if $k=q(q+1)$, then $b=q^{2}(q^{2}-q+1)$ and $\lambda=q^{2}+q-1$, and if $k=q(q+1)/2$, then $b=2q^{2}(q^{2}-q+1)$ and $\lambda=(q^{2}+q-2)/2$. This means in both cases that $q$ is not a divisor of $|H\cap K|$, and this is impossible by \cite[Lemma 3.2]{a:Pierro-Ree-16}.

Let now $M_{0} \cong q^{3}{:}(q-1)$. Since $H_{0}$ is the parabolic subgroup isomorphic to $q^{3}{:}(q-1)$, we may can assume that $M_{0}=X_{\beta}$, for some $\beta \in \Pmc$. Moreover, $X$ is $2$-transitive, and so by \cite[2.2.8]{b:dembowski}, the group $X$ is also flag-transitive. Recall in this case that $r=p^{t}$ is a divisor of $|v-1|_p=q^3$ and $k=p^{3a-t}\lambda+1$, for some positive integer $t\leq 3a$. As $|G:M|=|X:M_{0}|=q^{3}+1$ divides $b=p^{t}(q^{3}+1)/k$, we conclude that $k$ divides $p^{t}$.  Since also $k=p^{3a-t}\lambda+1$, we conclude that $t=3a$ and $k=\lambda+1=p^{n}$ for some $n\leq 3a$, and hence $r=q^{3}$ and $b=p^{3a-n}(q^{3}+1)$.
Note by \cite[Corollary 1.3]{a:ABD-Exp} that $n\neq 3a$, and hence $K_{0}$ is properly contained in $M_{0}$.
We now apply \cite[Lemma 3.2]{a:Pierro-Ree-16}, and conclude that $X_{\alpha,\beta}=H_{0}\cap M_{0}$ is isomorphic to a cyclic group of order $q-1$ which has index $q^{3}$ in $H_{0}$, and since $H_{0}\cap K_{0}=X_{\alpha,B}$ is contained in $X_{\alpha,\beta}$ and $|H_{0}:H_{0}\cap K_{0}|=r=q^{3}$, we conclude that $X_{\alpha,B}=H_{0}\cap K_{0}=X_{\alpha,\beta}$. This in particular says that all subgroups of $X_{\alpha,\beta}$ fix the block $B$ including $L:=X_{\alpha,\beta,\gamma}$, for all $\gamma\in B\setminus\{\beta\} $. We know that $L$ is generated by an involution, and hence the non-trivial $X_{\alpha,\beta}$-orbits are of length $q-1$ or $(q-1)/2$, respectively, when $\gamma$ is fixed by $X_{\alpha,\beta}$ or not. Since $B$ is fixed by $X_{\alpha,\beta}$, it follows that $B\setminus\{\alpha\}$ is union of non-trivial $X_{\alpha,\beta}$-orbits, that is to say, $(q-1)/2$ divides $k-1$. Recall that $k=p^{n}$, for some $1\leq n<3n$. Therefore, $k$ is $q$ or $q^{2}$, and hence $\lambda$ is $q-1$ or $q^{2}-1$, respectively, and this follows parts (c) and (d).

Suppose that $X=E_{6}(q)$ and
$H\cap X=[q^{16}]{:}D_{5}(q)\cdot (q-1)$. Then $v=(q^8+q^4+1)(q^9-1)/(q-1)$. Note by \cite{a:LSS-rank3} that $G$ has nontrivial subdegrees  $q(q^{8}-1)(q^{3}+1)/(q-1)$ and $q^{8}(q^{5}-1)(q^{4}+1)/(q-1)$, and so by Lemma~\ref{lem:six}(d), we conclude that $r$ divides $q(q^{4}+1)$, and so Lemma \ref{lem:six}(c) implies that $v<r^2<q^{12}$, which is a contradiction.

Suppose that $X=E_{6}(q)$ and $H\cap X=[q^{25}]{:}A_{1}(q)A_{4}(q){\cdot}(q-1)$. Then $v=(q^{3}+1)(q^{4}+1)(q^{9}-1)(q^{12}-1)/(q-1)(q^{2}-1)$. It follows from \cite{a:Korableva-E6E7} that $X$ has subdegrees $q(q^5-1)(q^4-1)/(q-1)^2$ and $q^{13}(q^{5}-1)(q-1)/(q-1)^{2}$. Moreover, by Lemma \ref{lem:six}(a) and (c), the parameter $r$ is a divisor of $\gcd(v-1,|H|)$. Therefore, Lemma \ref{lem:six}(d) implies that $r$ divides $6aq(q^{4}+q^{3}+q^{2}+q+1)$, and so $r^2<q^{16}<v$, which is impossible by Lemma \ref{lem:six}(c).

We finally consider the remaining cases.  Then by \cite[Table 4]{a:ABD-Exp}, we easily observe that
\begin{align}\label{eq:parab}
|v-1|_p=
\begin{cases}
q^{3}, & \text{ if $X={}^3\!D_{4}(q)$ and $H\cap X=[q^{11}]{:}SL_{2}(q) {\cdot}(q^{3}-1)$; } \\
q^{2}, &\text{ if $X={}^2\!F_{4}(q)$ and $H\cap X=[q^{11}]{:}GL_{2}(q)$; }\\
q, & \text{ otherwise.}
\end{cases}
\end{align}
Note by Lemma~\ref{lem:subdeg} and Remark~\ref{rem:subdeg} that there is a unique subdegree $p^{c}$. Then by Lemma~\ref{lem:six}, $r$ must divide $\gcd( v-1,p^{c})$, and so $r$ divides $|v-1|_p$ as in \eqref{eq:parab}. Thus $v<(|v-1|_p)^2$ by Lemma \ref{lem:six}(c), which is impossible. For example, if $X={}^3\!D_{4}(q)$ and $H\cap X=[q^{11}]{:}SL_{2}(q) {\cdot}(q^{3}-1)$, then $v=(q^8+q^4+1)(q^3+1)$ and $r$ divides $|v-1|_p=q^3$, and so  $q^{11}<v<r^2<q^6$, which is a contradiction.

\subsection*{Acknowledgements}

The author would like to thank anonymous referees for providing us helpful and constructive comments and suggestions. The author are also grateful to Cheryl E. Praeger and Alice Devillers  for supporting his visit to UWA (The University of Western Australia) during July-September 2019. He would also like to thank Alexander Bors for introducing reference \cite{a:Pierro-Ree-16}.



\begin{thebibliography}{10}
\expandafter\ifx\csname url\endcsname\relax
  \def\url#1{\texttt{#1}}\fi
\expandafter\ifx\csname urlprefix\endcsname\relax\def\urlprefix{URL }\fi

\bibitem{a:ABD-Un-CP}
S.~H. Alavi, M.~Bayat, A.~Daneshkhah, Flag-transitive block designs and unitary
  groups, Submitted.
\newline\urlprefix\url{https://arxiv.org/abs/1909.08546}

\bibitem{a:ABD-Exp}
S.~H. Alavi, M.~Bayat, A.~Daneshkhah, Symmetric designs and finite simple
  exceptional groups of lie type, Submitted.
\newline\urlprefix\url{https://arxiv.org/abs/1702.01257}

\bibitem{a:ABD-PSL2}
S.~H. Alavi, M.~Bayat, A.~Daneshkhah, Symmetric designs admitting
  flag-transitive and point-primitive automorphism groups associated to two
  dimensional projective special groups, Designs, Codes and Cryptography 79
  (2016) 337--351.
\newline\urlprefix\url{http://dx.doi.org/10.1007/s10623-015-0055-9}

\bibitem{a:ADM-All}
S.~H. Alavi, A.~Daneshkhah, F.~Mouseli, A classification of flag-transitive
  block designs, Submitted.
\newline\urlprefix\url{https://arxiv.org/abs/1911.06175}

\bibitem{b:Beth-I-99}
T.~Beth, D.~Jungnickel, H.~Lenz, Design theory. {V}ol. {I}, vol.~69 of
  Encyclopedia of Mathematics and its Applications, 2nd ed., Cambridge
  University Press, Cambridge, 1999.
\newline\urlprefix\url{https://doi.org/10.1017/CBO9780511549533}

\bibitem{a:Biliotti-affine-17}
M.~Biliotti, A.~Montinaro, On flag-transitive symmetric designs of affine type,
  J. Combin. Des. 25~(2) (2017) 85--97.
\newline\urlprefix\url{https://doi.org/10.1002/jcd.21533}

\bibitem{b:BHR-Max-Low}
J.~N. Bray, D.~F. Holt, C.~M. Roney-Dougal, The maximal subgroups of the
  low-dimensional finite classical groups, vol. 407 of London Mathematical
  Society Lecture Note Series, Cambridge University Press, Cambridge, 2013,
  with a foreword by Martin Liebeck.
\newline\urlprefix\url{http://dx.doi.org/10.1017/CBO9781139192576}

\bibitem{a:delan-linear-space}
F.~Buekenhout, A.~Delandtsheer, J.~Doyen, Finite linear spaces with
  flag-transitive groups, J. Combin. Theory Ser. A 49~(2) (1988) 268--293.
\newline\urlprefix\url{https://doi.org/10.1016/0097-3165(88)90056-8}

\bibitem{b:Atlas}
J.~H. Conway, R.~T. Curtis, S.~P. Norton, R.~A. Parker, R.~A. Wilson, Atlas of
  finite groups, Oxford University Press, Eynsham, 1985, maximal subgroups and
  ordinary characters for simple groups, With computational assistance from J.
  G. Thackray.

\bibitem{b:dembowski}
P.~Dembowski, Finite Geometries, Springer-Verlag, New York, 1968.
\newline\urlprefix\url{https://books.google.com/books?id=1wPzoAEACAAJ}

\bibitem{b:Dixon}
J.~D. Dixon, B.~Mortimer, Permutation groups, vol. 163 of Graduate Texts in
  Mathematics, Springer-Verlag, New York, 1996.
\newline\urlprefix\url{http://dx.doi.org/10.1007/978-1-4612-0731-3}

\bibitem{a:Downs-Suz-16}
M.~Downs, G.~A. Jones, M\"{o}bius inversion in {S}uzuki groups and enumeration
  of regular objects, in: Symmetries in graphs, maps, and polytopes, vol. 159
  of Springer Proc. Math. Stat., Springer, [Cham], 2016, pp. 97--127.
\newline\urlprefix\url{https://doi.org/10.1007/978-3-319-30451-9_5}

\bibitem{GAP4}
The GAP~Group, {GAP -- Groups, Algorithms, and Programming, Version 4.7.9}
  (2015).
\newline\urlprefix\url{http://www.gap-system.org}

\bibitem{a:Kantor-85}
W.~M. Kantor, Homogeneous designs and geometric lattices, J. Combin. Theory
  Ser. A 38~(1) (1985) 66--74.
\newline\urlprefix\url{https://doi.org/10.1016/0097-3165(85)90022-6}

\bibitem{b:KL-90}
P.~Kleidman, M.~Liebeck, The subgroup structure of the finite classical groups,
  vol. 129 of London Mathematical Society Lecture Note Series, Cambridge
  University Press, Cambridge, 1990.
\newline\urlprefix\url{http://dx.doi.org/10.1017/CBO9780511629235}

\bibitem{a:Kleidman-87}
P.~B. Kleidman, The finite flag-transitive linear spaces with an exceptional
  automorphism group, in: Finite geometries and combinatorial designs
  ({L}incoln, {NE}, 1987), vol. 111 of Contemp. Math., Amer. Math. Soc.,
  Providence, RI, 1990, pp. 117--136.
\newline\urlprefix\url{https://doi.org/10.1090/conm/111/1079743}

\bibitem{b:K-Ex-linear}
P.~B. Kleidman, The finite flag-transitive linear spaces with an exceptional
  automorphism group, in: Finite geometries and combinatorial designs
  ({L}incoln, {NE}, 1987), vol. 111 of Contemp. Math., Amer. Math. Soc.,
  Providence, RI, 1990, pp. 117--136.
\newline\urlprefix\url{https://doi.org/10.1090/conm/111/1079743}

\bibitem{a:Korableva-E6E7}
V.~V. Korableva, Parabolic permutation representations of groups $e_6(q)$ and
  $e_7(q)$, in: Combinatorial and Computational Methods in Mathematics [in
  Russian], Omsk Univ., Omsk, 1999, pp. 160--189.

\bibitem{b:lander}
E.~S. Lander, Symmetric designs: an algebraic approach, vol.~74 of London
  Mathematical Society Lecture Note Series, Cambridge University Press,
  Cambridge, 1983.
\newline\urlprefix\url{http://dx.doi.org/10.1017/CBO9780511662164}

\bibitem{a:LSS-rank3}
M.~W. Liebeck, J.~Saxl, The finite primitive permutation groups of rank three,
  Bull. London Math. Soc. 18~(2) (1986) 165--172.
\newline\urlprefix\url{http://dx.doi.org/10.1112/blms/18.2.165}

\bibitem{a:LSS1987}
M.~W. Liebeck, J.~Saxl, G.~Seitz, On the overgroups of irreducible subgroups of
  the finite classical groups., Proc. Lond. Math. Soc. 50~(3) (1987) 507--537.
\newline\urlprefix\url{http://dx.doi.org/10.1112/plms/s3-50.3.426}

\bibitem{a:Luneburg-66}
H.~L\"{u}neburg, Some remarks concerning the {R}ee groups of type {$(G_{2})$},
  J. Algebra 3 (1966) 256--259.
\newline\urlprefix\url{https://doi.org/10.1016/0021-8693(66)90014-7}

\bibitem{a:Pierro-Ree-16}
E.~Pierro, The {M}\"{o}bius function of the small {R}ee groups, Australas. J.
  Combin. 66 (2016) 142--176.

\bibitem{a:Saxl2002}
J.~Saxl, On finite linear spaces with almost simple flag-transitive
  automorphism groups, J. Combin. Theory Ser. A 100~(2) (2002) 322--348.
\newline\urlprefix\url{http://dx.doi.org/10.1006/jcta.2002.3305}

\bibitem{a:tits}
G.~M. Seitz, Flag-transitive subgroups of {C}hevalley groups, Ann. of Math. (2)
  97 (1973) 27--56.

\bibitem{a:Zhou-lam-large-sporadic}
D.~Tian, S.~Zhou, Flag-transitive $2$-$(v,k,\lambda)$ symmetric designs with
  sporadic socle, Journal of Combinatorial Designs 23~(4) (2015) 140--150.
\newline\urlprefix\url{https://onlinelibrary.wiley.com/doi/abs/10.1002/jcd.21385}

\bibitem{a:Zhan-nonsym-sprodic}
X.~Zhan, S.~Zhou, Flag-transitive non-symmetric $2$-designs with $(r,\lambda
  )=1$ and sporadic socle, Des. Codes Cryptography 81~(3) (2016) 481--487.
\newline\urlprefix\url{http://dx.doi.org/10.1007/s10623-015-0171-6}

\bibitem{a:Zhou-nonsym-alt}
S.~Zhou, Y.~Wang, Flag-transitive non-symmetric 2-designs with
  {$(r,\lambda)=1$} and alternating socle, Electron. J. Combin. 22~(2) (2015)
  Paper 2.6, 15.

\bibitem{a:Zhu-sym-alternating}
Y.~Zhu, H.~Guan, S.~Zhou, Flag-transitive 2-(v, k, $\lambda$) symmetric designs
  with (k, $\lambda$) = 1 and alternating socle, Frontiers of Mathematics in
  China 10~(6) (2015) 1483--1496.
\newline\urlprefix\url{https://doi.org/10.1007/s11464-015-0480-0}

\bibitem{a:ZIESCHANG}
P.-H. Zieschang, Flag transitive automorphism groups of 2-designs with
  $(r,\lambda )=1$, Journal of Algebra 118~(2) (1988) 369 -- 375.
\newline\urlprefix\url{http://www.sciencedirect.com/science/article/pii/0021869388900270}

\end{thebibliography}

\end{document}